\numberwithin{equation}{section}
\newcommand{\eps}{\epsilon}
\newcommand{\ga}{\gamma}
\newcommand{\si}{\sigma}
\newcommand{\Si}{\Sigma}
\newcommand{\ANR}{\operatorname{ANR}}
\newcommand{\CAT}{\operatorname{CAT}}
\newcommand{\R}{\operatorname{\mathbb{R}}}
\newcommand{\Sph}{\operatorname{\mathbb{S}}}
\theoremstyle{plain}
\newtheorem{thm}{Theorem}[section]
\newtheorem{lem}[thm]{Lemma}
\newtheorem{prop}[thm]{Proposition}
\newtheorem{cor}[thm]{Corollary}
\theoremstyle{definition}
\newtheorem{defn}[thm]{Definition}
\theoremstyle{remark}
\newtheorem{rem}[thm]{Remark}
\newtheorem{quest}[thm]{Question}
\title
[CAT(0) 4-manifolds]
{CAT(0) 4-manifolds are euclidean}
\author{Alexander Lytchak, Koichi Nagano, Stephan Stadler}
\subjclass[2010]{53C23, 54F65, 51H20, 51K10, 57N13}
\keywords{$\R^4$, Cartan-Hadamard, strainer map}
\begin{document}

\begin{abstract}
We prove that a topological 4-manifold of globally
non-positive curvature is homeomorphic to Euclidean space.
\end{abstract}

\maketitle

\section{Introduction}

\subsection{Main result}

This paper concerns the topology of {\em CAT(0) manifolds}.
These are synthetic generalizations of  complete, simply connected Riemannian manifolds of non-positive sectional curvature.
By the classical theorem of Cartan--Hadamard, any such Riemannian manifold is diffeomorphic to the Euclidean space $\R^n$.
In his seminal paper \cite{G_hypmfds}, Gromov asked if there exist simply connected  topological manifolds   other than Euclidean
space, which  admit a metric of non-positive curvature in a synthetic sense. (See Section~\ref{subsec: gen} for further discussion.)
The most important synthetic notion of non-positive curvature is the one due to Alexandrov.
The corresponding spaces were named {\em CAT(0)} by Gromov.
Any CAT(0) space is contractible. Thus, any CAT(0) 2-manifold is homemorphic to $\R^2$ by the classification of surfaces. In dimensions strictly greater than two,
contractible manifolds are Euclidean precisely when they are simply connected at infinity, thanks to classical topological results \cite{Freedman_4D, Husch, Stallings_pl_euc}.
In dimension three, CAT(0) manifolds are indeed Euclidean \cite{Br_mon, Ro_cell, T_tame}.
In dimensions strictly greater than four,
Davis and Januszkiewicz \cite{DJ} constructed examples of non-Euclidean CAT(0) manifolds. In this paper, we deal with the remaining open case:

\begin{thm}\label{thm_gromov}
Let $X$ be a CAT(0) space which is topologically a 4-dimensional
manifold. Then $X$ is homeomorphic to $\R^4$.
\end{thm}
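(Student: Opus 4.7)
\emph{Proof proposal.} The plan is to reduce Theorem~\ref{thm_gromov} to classical 4-manifold topology by establishing that $X$ is simply connected at infinity. Any CAT(0) space is contractible via geodesic retraction to a basepoint, and, by the combination of Freedman's 4-dimensional work with the Husch--Stallings results cited in the introduction, a contractible topological 4-manifold is homeomorphic to $\R^4$ if and only if it is simply connected at infinity. Hence the entire analytical burden is to establish simple connectivity at infinity for $X$.

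The first main step is local control of the CAT(0) structure. At each $p\in X$, the tangent cone $T_pX$ is the Euclidean cone over the space of directions $\Sigma_p X$, itself a CAT(1) space of dimension at most three. Since $X$ is a topological 4-manifold, the link of $p$ is a topological $3$-sphere, and I would upgrade this to the assertion that $\Sigma_p X$ is homeomorphic to the round $S^3$ in a geometrically controlled way, sufficient to support a nondegenerate $4$-strainer at $p$. This low-dimensional rigidity for CAT(1) topological spheres is exactly what distinguishes dimension four from the Davis--Januszkiewicz setting in higher dimensions, where exotic CAT(1) spheres of dimension $\geq 4$ underlie the counterexamples. Strainer maps built from distances to the strainer points then produce local homeomorphisms from neighborhoods in $X$ onto open subsets of $\R^4$.

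Simple connectivity at infinity is then established by coning loops toward infinity. Given a basepoint $o\in X$ and a loop $\gamma$ in $X\setminus B(o,R)$, geodesic coning from a suitably chosen far point produces a singular disk; the CAT(0) convexity of distance functions together with the local strainer charts should force the disk to lie in $X\setminus B(o,R')$ for some $R'\to\infty$ with $R$. The principal obstacle is the rigidity statement for spaces of directions: showing that every CAT(1) $3$-dimensional topological sphere arising as $\Sigma_p X$ is homeomorphic to the round $S^3$, with enough quantitative control to construct strainers and to transport the conclusion from tangent cones to neighborhoods of $p$. This is presumably the technical heart of the paper and is likely established by induction on dimension, combined with the disjoint disks, local geometric connectivity, and local convex connectivity properties suggested by the preamble macros $\DDP$, $\LGC$, and $\LCC$.
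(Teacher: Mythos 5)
There are two genuine gaps, and each is fatal to the route you propose. First, the step ``$\Sigma_pX$ is homeomorphic to $S^3$ in a geometrically controlled way, sufficient to support a nondegenerate $4$-strainer at $p$'' does not hold. Knowing that the space of directions is \emph{topologically} a $3$-sphere gives no metric control whatsoever: take the Euclidean cone over any CAT(1) metric on $\mathbb S^3$ that is far from round (such metrics exist in abundance); this is a CAT(0) $4$-manifold, but at the cone point the tangent cone is not $\R^4$ and no $(4,\delta)$-strainer with small $\delta$ exists, so there is no strainer chart onto an open subset of $\R^4$ there. Indeed the paper's own structure theory only gives that a CAT(0) (homology) $4$-manifold is bilipschitz to $\R^4$ away from a set of codimension $2$ and is a manifold away from a discrete set; the entire difficulty of the problem is concentrated at the points where full strainers fail, which is exactly why the authors introduce \emph{extended} strainer maps with one additional orthogonal-but-non-straining coordinate $d_o$ and study fibers and halfspaces of those, rather than building charts on $X$.

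Second, the coning argument for simple connectivity at infinity cannot work as described. Convexity of $d_o$ along a geodesic gives an \emph{upper} bound $d_o(\gamma(t))\le(1-t)d_o(q)+t\,d_o(x)$, not a lower bound, so the coned disk may pass arbitrarily close to $o$; nothing in your sketch prevents this. More decisively, your mechanism (links are topological spheres $\Rightarrow$ local charts $\Rightarrow$ cone loops off to infinity) uses dimension four nowhere, so if it worked it would apply verbatim to the Davis--Januszkiewicz CAT(0) $n$-manifolds, $n\ge 5$, which are \emph{not} simply connected at infinity. Any correct proof must use dimension four in an essential, different way. The paper does: it proves (Theorem~\ref{thm_main}) that every distance sphere $S_R(o)$ is a topological $3$-manifold, by slicing $S_R(o)$ with small spheres $S_r(p)$, showing via extended strainers, the contractibility of hemispheres (Proposition~\ref{prop_contrhemi}), Poincar\'e duality and the classification of surfaces that the slices are $2$-spheres, then using the Jordan curve theorem and fibration/bundle recognition theorems to identify the local topology of $S_R(o)$; only low-dimensional facts (homology $2$-manifolds are manifolds, surfaces are classified) make this work. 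Finally, instead of verifying simple connectivity at infinity directly, the paper concludes by invoking Thurston's theorem that a CAT(0) $4$-manifold all of whose distance spheres are $3$-manifolds is homeomorphic to $\R^4$. Your proposal is missing both the mechanism that replaces the nonexistent global strainer charts and the dimension-specific input at infinity.
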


This theorem also answers  the first question  in \cite[Section 6]{DJL_4D}.

\subsection{Related statements: the simplical case}
Examples of Davis and  Januszkiewicz \cite{DJ} mentioned above are simplicial complexes with piecewise Euclidean metrics. On the other hand,
 Stone \cite{Stone_pl}  verified that a \emph{PL} $n$-manifold which is CAT(0) with respect to a piecewise Euclidean metric is homeomorphic to the   Euclidean space $\R^n$. By the resolution of the Poincar\'e conjecture, any simplicial complex homeomorphic to a 4-manifold is a PL-manifold. Consequently,
any CAT(0) $4$-manifold with a piecewise Euclidean metric  is homeomorphic to $\R^4$.

 More recently, the question as to which  manifolds  carry a piecewise Euclidean CAT(0) metric has been reduced to a purely topological problem in  \cite{AB_collapse,AF_contractible}. Namely,  a manifold carries such a metric if and only if it is homeomorphic to a collapsible simplicial complex. Building on this result and previous work of Ancel--Guilbault \cite{AG_hyp}, it has been verified in \cite{AF_contractible} that  for all $n> 4$, the interior of any compact contractible $n$-manifold with boundary  carries a piecewise Euclidean CAT(0) metric.
Thus, in dimensions five and above, there is an abundance of contractible  CAT(0) manifolds.  Some necessary conditions for the existence of a piecewise Euclidean CAT(0) metric (hence for the existence of a collapsible triangulation, which is not PL)   are given in
\cite[Theorem 1]{AF_contractible}.
Motivated by Gromov's question, it is natural to ask the following, compare \cite[Question 3]{AF_contractible}.

\begin{quest}
Are there  CAT(0) topological manifolds which do not carry a piecewise Euclidean CAT(0) metric?
\end{quest}

\begin{quest}
What are  necessary and sufficient conditions  for the existence of a CAT(0) metric on a contractible manifold?
\end{quest}

\subsection{Related statements: the cocompact case}
Gromov's question has  been thoroughly studied in the \emph{cocompact} setting. (Recall, that
universal coverings of locally CAT(0) spaces are CAT(0) \cite{AB_ch}). 
By \cite[Theorem 5b.1]{DJ}, in
all dimensions strictly greater than four, there exist compact,  locally CAT(0) manifolds
 whose universal coverings are not Euclidean, see also \cite{ADG}. Moreover, in such dimensions there 
 exist compact,  locally CAT(0) manifolds which have no PL structure at all, \cite[Section 5a]{DJ}. We refer to \cite[Section 3]{DJL_4D} for an overview of further similar results.

In dimension four, several classes of  smoothable compact topological manifolds carrying a locally CAT(0) metric, yet not admitting a smooth metric of non-positive curvature, have been constructed in \cite{DJL_4D, Sat_4D, St_obst}.
In these examples, the universal covering is homeomorphic to $\R^4$ (as follows from our main theorem), and identifying the obstructions to the existence of smooth metrics relies on an intricate analysis of the group actions involved.

\subsection{Distance spheres}
Our proof depends upon an important contribution by Thurston \cite{T_tame}. He showed that
if all distance spheres to some fixed point $o\in X$ of a 4-dimensional CAT(0) manifold $X$
are topological 3-manifolds, then $X$ is homeomorphic to $\R^4$.  
Using a finer analysis of the metric structure of the space, we verify this latter condition in the more general setting of \emph{homology 4-manifolds}; see Sections \ref{subsec: hom} and \ref{subsec_hommfds} for the relevant definition and properties.

\begin{thm}\label{thm_main}
Let $X$ be a CAT(0) space which is a homology
$4$-manifold. Let $o\in X$ and $R>0$ be arbitrary. Then the distance sphere $S_R(o)$ of
radius $R$ around $o$ is a topological 3-manifold.
\end{thm}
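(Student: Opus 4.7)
The plan is to construct, around each point $p \in S_R(o)$, a local chart of $X$ given by a distance-function map $\Phi : U \to \R^4$ whose first coordinate is $d(o,\cdot)$. Once $\Phi$ is shown to be a homeomorphism onto an open set $V \subset \R^4$, the sphere $S_R(o) \cap U$ corresponds under $\Phi$ to $V \cap \{y_0 = R\}$, an open subset of a coordinate hyperplane; the remaining three coordinates of $\Phi$ then form a local chart exhibiting $S_R(o)$ as a topological $3$-manifold near $p$. This mirrors the classical strainer-map strategy of Alexandrov geometry, adapted to the CAT(0) homology-manifold setting.

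I would first verify geodesic completeness of $X$: the local-homology-sphere condition at every point prevents a geodesic from terminating, so the geodesic from $o$ to $p$ extends past $p$ to a ray. Picking $o^*$ far along this extension gives $\angle o\, p\, o^* = \pi$. In the tangent cone $T_p X$ — itself a CAT(0) homology $4$-manifold — the directions toward $o$ and $o^*$ are antipodal, so the CAT(0) splitting theorem decomposes $T_p X$ isometrically as $\R \times C_p$, where $C_p$ is a CAT(0) cone inheriting the structure of a homology $3$-manifold.

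Next I would produce three further strainer pairs $(a_i, b_i)_{i=1}^3$ at $p$, chosen so that the directions from $p$ to the $a_i$'s lie in the $C_p$-factor of $T_p X$ and are pairwise almost orthogonal. The space of directions of $C_p$ at its cone vertex is CAT($1$) and a homology $2$-sphere, which is enough to extract three nearly orthonormal directions; I would realize these by outgoing geodesics from $p$ in $X$ and use geodesic completeness to supply antipodal companions $b_i$. The combined $4$-strainer $(a_0,b_0),\dots,(a_3,b_3)$, with $(a_0,b_0) = (o,o^*)$, then defines
\[
\Phi(x) = \bigl( d(o,x),\, d(a_1,x),\, d(a_2,x),\, d(a_3,x) \bigr),
\]
which is bi-Lipschitz onto its image at small scales by CAT(0) angle comparison.

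The principal obstacle will be promoting the local bi-Lipschitz property of $\Phi$ to a genuine local homeomorphism onto an open subset of $\R^4$ under the mere hypothesis that $X$ is a homology manifold, and thereby deducing that $\Phi|_{S_R(o)}$ is a local homeomorphism onto $\R^3$. Classical invariance of domain does not apply directly, so I expect the proof to rely on a manifold-recognition argument that exploits the local-homology-sphere condition together with the $3$-dimensional rigidity of $C_p$ — namely, that $C_p$ is a CAT(0) cone which is a homology $3$-manifold with a homology $2$-sphere as its space of directions — to rule out pathological branching at finite scales. Transferring the infinitesimal splitting $T_p X \cong \R \times C_p$ into honest topological information on the distance sphere is, in my view, the technical heart of the result.
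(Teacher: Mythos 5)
Your reduction hinges on the isometric splitting $T_pX \cong \R\times C_p$, and that step fails. Geodesic completeness does give an extension $o^*$ with $\angle o\,p\,o^*=\pi$, i.e.\ the direction $v$ of $po$ has \emph{some} antipode in $\Sigma_pX$; but a splitting of $\Sigma_pX$ (hence of $T_pX$) requires $v$ to have a \emph{unique} antipode, or quantitatively to be $\delta$-spherical. At a point $p$ of a remote sphere there is no reason for the direction toward $o$ to be almost spherical, so $(o,o^*)$ need not be a $(1,\delta)$-strainer at $p$ for any small $\delta$, and $d_o$ cannot be taken as a straining coordinate of your map $\Phi$. This is precisely the difficulty the paper isolates: it works with \emph{extended} strainer maps, in which $d_o$ enters only as an ``orthogonal but non-straining'' extra coordinate controlled by the angle inequalities $\angle qxp_i<\frac{\pi}{2}+\delta$ (and the same for antipodal continuations), with no splitting of the tangent cone at $p$ ever asserted.

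There is also a structural obstruction to your overall strategy. You aim to produce a chart of $X$ near $p$ and slice it, but a CAT(0) homology $4$-manifold need not be a manifold --- e.g.\ the Euclidean cone over the Poincar\'e homology sphere --- and a non-manifold point can lie on $S_R(o)$ (take $R$ equal to the distance from $o$ to the cone point). At such a $p$ no distance map $U\to\R^4$ can be a local homeomorphism and no $(4,\delta)$-strainer exists for small $\delta$, yet the theorem still asserts that $S_R(o)$ is a $3$-manifold there; so the chart approach cannot cover all cases even in principle, and the step you yourself flag as the ``technical heart'' (upgrading bi-Lipschitz to a homeomorphism) is not merely unproved but unprovable in this form. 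The paper's actual route is different: near $p$ it shows that $d_p$ restricted to $S_R(o)$ is a fiber bundle with fiber $\Sph^2$ by proving that halfspaces and fibers of extended strainer maps are ANR homology manifolds with boundary, that small hemispheres $S_\eps(x)\cap \bar B_R(o)$ and their slices are contractible via a rescaling/blow-up argument converging to a Euclidean cone, then identifying the relevant fibers as $2$-spheres and their slices as circles using Moore's theorem, Poincar\'e duality, the classification of surfaces and the Jordan curve theorem, and finally invoking the Ferry--Ungar fibration and fiber bundle recognition theorems. None of these ingredients is recovered by your proposal, so the gap is essential rather than technical.
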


We remark that this result does not hold true in dimensions $n \geq 5$, even
for piecewise Euclidean topological $n$-manifolds. Indeed, this can be seen in the aforementioned examples of Davis and Januszkiewicz; compare
\cite[Proposition~3d.3]{DJ}.

If $X$ is a CAT(0) $4$-manifold (and not just a homology $4$-manifold), then the resolution of the Poincar\'e conjecture together with  \cite{T_tame} implies that all distance spheres are homeomorphic to $\mathbb S^3$. Moreover, the homeomorphism in Theorem~\ref{thm_gromov}
is not completely abstract, but rather has the following geometric feature.

\begin{cor} \label{cor:th}
	Let $X$ be a $4$-dimensional CAT(0) manifold  and let $o\in X$ be an arbitrary point.  Then the distance function
	$d_o:X\setminus \{o \} \to (0,\infty)$ is a trivial fiber bundle with fiber $\mathbb S^3$.
\end{cor}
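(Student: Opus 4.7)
The plan is to trivialize $d_o$ using radial geodesic rays from $o$. By Theorem~\ref{thm_main} each sphere $S_R(o)$ is a topological $3$-manifold, and by the discussion preceding the corollary (the resolution of the Poincar\'e conjecture combined with \cite{T_tame}) one has $S_R(o)\cong\mathbb{S}^3$ for every $R>0$. Fix once and for all a reference homeomorphism $\mathbb{S}^3\cong S_1(o)$.

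The central geometric input I would establish first is geodesic completeness: every geodesic segment emanating from $o$ extends uniquely to a complete geodesic ray $\ga_\si:[0,\infty)\to X$, parametrized by arclength and determined by its intersection $\si=\ga_\si(1)\in S_1(o)$. Existence of the extension rests on the local manifold structure at every $p\in X$ (small distance spheres around $p$ are topological $3$-spheres by Theorem~\ref{thm_main}, so every direction in $\Si_pX$ admits an antipode), and uniqueness is the non-branching of geodesics in CAT(0) spaces.

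Given this, I would define
\[
\Phi:S_1(o)\times(0,\infty)\longrightarrow X\setminus\{o\},\qquad \Phi(\si,t)=\ga_\si(t).
\]
This is bijective by existence and uniqueness of the rays $\ga_\si$: every $x\in X\setminus\{o\}$ lies on the unique CAT(0)-geodesic $[o,x]$, whose associated ray picks out a unique pair $(\si,t)$ (extending past $x$ when $d(o,x)<1$). Continuity of $\Phi$ follows from the standard $1$-Lipschitz dependence of CAT(0) geodesics on their endpoints. Both the domain $\mathbb{S}^3\times(0,\infty)$ and the target $X\setminus\{o\}$ (with $X\cong\R^4$ by Theorem~\ref{thm_gromov}) are topological $4$-manifolds, so invariance of domain upgrades the continuous bijection $\Phi$ to a homeomorphism. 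Since $d_o\circ\Phi(\si,t)=t$, composing with the chosen homeomorphism $\mathbb{S}^3\cong S_1(o)$ identifies $d_o$ with the projection $\mathbb{S}^3\times(0,\infty)\to(0,\infty)$, which is the desired trivialization.

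The main obstacle is the geodesic-completeness step, i.e.\ verifying that every ray exists for every initial direction at every point; this is the one place where the CAT(0) manifold hypothesis (rather than merely the already established homeomorphism $X\cong\R^4$) is really used. Once this is in place, the rest is a routine combination of CAT(0) comparison with invariance of domain.
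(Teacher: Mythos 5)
There is a genuine gap at the heart of your construction: the claim that a geodesic segment emanating from $o$ ``extends uniquely'' to a ray, justified by ``the non-branching of geodesics in CAT(0) spaces''. CAT(0) spaces are not non-branching, and neither are CAT(0) \emph{manifolds}. The standard counterexample is the Euclidean cone over a circle of length greater than $2\pi$: it is a flat CAT(0) space homeomorphic to $\R^2$, and a geodesic entering the cone point admits a whole arc of distinct geodesic extensions (every direction at angle $\pi$ from the incoming one). Taking the product with $\R^2$ gives a CAT(0) $4$-manifold with the same phenomenon along a plane of singular points. If $o$ is chosen off the singular set, the segment $[o,\si]$ through a point $\si\in S_1(o)$ lying before a singular point has many extensions, so your ray $\ga_\si$ is not well defined; after making choices, $\Phi$ fails to be surjective (points on the unchosen branches are missed), so invariance of domain never gets off the ground. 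Uniqueness in the CAT(0) setting only goes \emph{inward}: the geodesic from $o$ to $x$ is unique, which is why the geodesic contractions $c_{R,r}:S_R(o)\to S_r(o)$ are well defined, but these maps are in general not injective --- in the example above the preimage of a point can be an arc. (Geodesic completeness itself is fine, but it is quoted in the paper from \cite{LS_affine} rather than derived from antipodes as you sketch; that part is not the problem.)

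This failure of unique extendability is exactly why the paper's proof cannot be a radial trivialization and instead runs through decomposition-space topology: after establishing $S_R(o)\cong \mathbb S^3$ (Theorem~\ref{thm_main}, Poincar\'e, \cite{T_tame}), one shows the contractions $c_{R,r}$ are \emph{cell-like} maps of $3$-spheres, invokes the approximation and homotopy-equivalence results of Theorem~\ref{lem: siebenmann} to get uniform local contractibility of the fibers of $d_o$, and then applies the fibration and bundle recognition theorems (Theorem~\ref{thm: hurewicz}, Theorem~\ref{thm: trivialbundle}); triviality is then automatic over the contractible base $(0,\infty)$. Your approach, if it worked, would show that $X$ is the metric cone over $S_1(o)$ with all $c_{R,r}$ homeomorphisms, which is strictly false for general CAT(0) $4$-manifolds, so the gap is not repairable within your strategy.
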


On the other hand, for a general CAT(0) homology $4$-manifold, the topology of the distance spheres may depend on the radius despite the fact that all of the spheres involved are manifolds. This can already be seen in the Euclidean cone $X=C(\Sigma)$ over the Poincar\'e homology sphere $\Sigma$.
The fine topological analysis of \cite{T_tame}, using the fact that the ambient space is a manifold, is therefore indispensable for the conclusion of our main theorem.

Corollary~\ref{cor:th} extends to the \emph{ideal boundary}
$\partial _{\infty } X$ and the \emph{natural compactification} $\bar X:=X\cup  \partial _{\infty } X$ of $X$, see \cite[Section II.8]{BH} for the definition and properties of ideal boundaries. In  dimensions $n\geq 5$, there are  CAT(0) spaces  homeomorphic to $\R^n$  with ideal boundary  different from $\mathbb S^{n-1}$ \cite[Theorem 5c.1]{DJ}.  In dimension four we show:

\begin{cor} \label{cor-jan}
	Let $X$ be a  $4$-dimensional topological manifold with a CAT(0) metric.  Then the 
	ideal boundary $\partial _{\infty} X$ of $X$ is homeomorphic to $\mathbb S^3$ and the 
	canonical compactification $\bar X=X\cup \partial _{\infty} X$ is homeomorphic to the closed unit ball in $\R^4$. 
\end{cor}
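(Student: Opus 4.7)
Since $X$ is a CAT(0) topological manifold homeomorphic to $\R^4$ by Theorem~\ref{thm_gromov}, it is proper; hence $\partial_\infty X$ is compact and $\bar X$ is a compact metrizable space in the cone topology. Fix a basepoint $o\in X$. The plan is to construct a ``radial'' homeomorphism $L : \bar X \to \overline{B^4}$ from the logarithm map at $o$, carrying $o$ to the origin and $\partial_\infty X$ onto the boundary sphere $\mathbb{S}^3$.

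The central step will be to prove that every geodesic segment $[o,p] \subset X$ extends to a geodesic ray $[0,\infty) \to X$. Equivalently, the radial projection
\[
\pi_R : \partial_\infty X \to S_R(o), \quad \xi \mapsto \gamma_\xi(R),
\]
should be surjective for every $R>0$, where $\gamma_\xi$ is the unique geodesic ray from $o$ representing $\xi$. The image $\pi_R(\partial_\infty X) \subseteq S_R(o)$ is non-empty (since $X$ is unbounded and proper) and closed (by Arzel\`a--Ascoli applied to sequences of rays), and I would prove openness separately; then since $S_R(o) \cong \mathbb{S}^3$ by Corollary~\ref{cor:th} is connected, $\pi_R(\partial_\infty X) = S_R(o)$. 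Openness reduces to a local extension statement: at every point $q\in X$ and every direction $v \in \Sigma_q X$, there is an antipode $-v \in \Sigma_q X$ with $\angle(v,-v)=\pi$. For this I would use the 4-manifold structure at $q$: applying Theorem~\ref{thm_main} at $q$ and identifying the rescaled Gromov--Hausdorff limit of small distance spheres $S_r(q)$ with $\Sigma_q X$, one obtains $\Sigma_q X$ as a CAT(1) topological 3-sphere, and the Euclidean-like local structure inherited from the ambient manifold produces the antipode.

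Once surjectivity of $\pi_R$ is in hand, $\pi_R$ is a continuous bijection from the compact space $\partial_\infty X$ onto the Hausdorff space $S_R(o)$, hence a homeomorphism, giving $\partial_\infty X \cong \mathbb{S}^3$. For the full compactification, define
\[
L(o)=0, \quad L(x) = f(d(o,x))\cdot \iota(v_o(x)) \ \text{for } x\in X\setminus\{o\}, \quad L(\xi) = \iota(v_o(\gamma_\xi)) \ \text{for } \xi\in\partial_\infty X,
\]
where $v_o(\,\cdot\,)\in \Sigma_o X$ denotes the initial direction at $o$, $\iota : \Sigma_o X \to \mathbb{S}^3$ is a fixed homeomorphism (using $\Sigma_o X \cong \mathbb{S}^3$), and $f : [0,\infty]\to[0,1]$ is a homeomorphism. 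Geodesic extendability makes $L$ a bijection, and the Arzel\`a--Ascoli-type continuity of geodesic rays in a proper CAT(0) space makes $L$ continuous at $\partial_\infty X$; continuity at $o$ and at interior points is elementary. Being a continuous bijection between a compact space and a Hausdorff space, $L$ is a homeomorphism that carries $\partial_\infty X$ onto $\partial B^4 = \mathbb{S}^3$.

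The main obstacle is the local antipode statement for $\Sigma_q X$. While Theorem~\ref{thm_main} gives the topological control on small spheres around $q$, extracting the metric antipode property directly from the manifold structure --- independently of any prior knowledge of global geodesic extension --- is the delicate point, and likely requires arguments drawn from the body of the paper. Once this is in place, the remainder of the argument is a formal compactness-and-connectedness exercise.
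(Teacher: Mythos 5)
Your plan stands or falls with the claim that the ``radial'' maps $\pi_R:\partial_\infty X\to S_R(o)$ and $L:\bar X\to \overline{B^4}$ are \emph{bijections}, and that is where the proposal breaks down. Surjectivity, which you treat as the delicate point, is actually free: every CAT(0) homology manifold is geodesically complete (this is quoted in Section~\ref{subsec_hommfds}, from \cite{LS_affine}), so every segment $op$ extends to a complete geodesic, hence to a ray from $o$, and no antipode construction via Theorem~\ref{thm_main} is needed. What fails is \emph{injectivity}: in a CAT(0) space geodesics emanating from $o$ may branch, and this happens even for CAT(0) $4$-manifolds. Take $X=C_\theta\times\R^2$, where $C_\theta$ is a flat cone of angle $2\pi+\theta$; this is CAT(0) and homeomorphic to $\R^4$, but a geodesic from $o$ that hits the singular line admits a one-parameter family of extensions. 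Consequently two distinct points of $S_R(o)$ can lie on geodesics from $o$ with the same initial segment (so $L$ is not injective on $X$), two distinct ideal points can be represented by rays from $o$ agreeing up to time $r$ (so $\pi_r$ and $L|_{\partial_\infty X}$ are not injective), and the ``compact-to-Hausdorff continuous bijection'' argument never gets off the ground. Polar coordinates at $o$ simply do not give a well-defined parametrization of $\bar X$ by the cone over $\Sigma_o X$.

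This collapse of radial coordinates is precisely why the paper argues differently: it uses the geodesic contractions $c_{R,r}:S_R(o)\to S_r(o)$ (toward $o$), which are always well defined and continuous but only \emph{cell-like} rather than injective; their point preimages are cell-like sets, not points. The paper shows these are uniform limits of homeomorphisms (Theorem~\ref{lem: siebenmann}, after verifying cell-likeness as in the proof of Corollary~\ref{cor:th}), identifies $\partial_\infty X=\varprojlim S_r(o)$, and invokes Brown's inverse limit theorem to conclude $\partial_\infty X\cong\mathbb S^3$; the statement about $\bar X$ is then obtained not by a radial homeomorphism but by showing that the function $f(x)=d_o(x)/(1+d_o(x))$, extended by $1$ on $\partial_\infty X$, has uniformly locally contractible $\mathbb S^3$-fibers on $\bar X\setminus\{o\}$, so that Theorem~\ref{thm: trivialbundle} makes it a trivial bundle and $\bar X$ the one-point compactification of $\mathbb S^3\times(0,1]$, i.e.\ the closed $4$-ball. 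To repair your argument you would either have to rule out branching of geodesics at $o$ (false in general, as above) or replace the bijection argument by exactly this kind of cell-like/near-homeomorphism machinery.
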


\subsection{Related statements: homology manifolds} \label{subsec: hom}
A homology $n$-manifold (without boundary)  is a locally compact metric  space $X$  of finite topological dimension such that,
for all  $x\in X$, the local homology $H_{\ast} (X,X\setminus \{x \})$ equals $H_{\ast} (\R^n,\R^n\setminus \{0 \})$.
 The structure theory of homology manifolds has been a central topic
in geometric topology for many decades and is  of fundamental importance in the topological \emph{manifold recognition} \cite{Can_rec,CHR_higher,Rep_rec}.

A homology $n$-manifold for $n\leq 2$ is always a topological $n$-manifold by a theorem of Moore \cite[Chapter IX]{rwilder}.
On the other hand,
in dimensions $n\geq 3$, a homology $n$-manifold may not have any manifold points at all \cite{DaWa_ghastly}. While there are deep and rather robust results allowing one to recognize when a homology manifold is a manifold in dimensions five and up, much less is known in dimensions three and four
\cite{Rep_rec}. Even though the tools of algebraic topology allow us to recognize homology manifolds in many instances, in particular, in the situation
of Theorem \ref{thm_main}  (in all dimensions), passing from homology manifolds to topological manifolds is difficult and requires some geometric insight.

  In the situation of Theorem~\ref{thm_main}, we achieve the needed local control of the topology of large spheres, by slicing them and  verifying that slices are $2$-dimensional spheres.
  Subsequently, these slices can be controlled uniformly with the help of Jordan's curve theorem. The control of the slices allows us to recover the local topology.

In contrast to the situation for general homology $n$-manifolds, CAT(0) homology $n$-manifolds are not too far from being manifolds. More precisely,
a CAT(0) homology $n$-manifold is a topological $n$-manifold on the complement of a discrete subset \cite[Theorem 1.2]{LN_top}; see \cite{Wu} for corresponding statements on spaces with lower curvature bounds.

We mention in passing a question of Busemann \cite{Bu_geo,   Ber-Bor}, which is related in spirit to the origins of this paper.
\begin{quest}
 Let $X$ be a locally compact geodesic  metric space. Assume that $X$ is geodesically complete and that there are no branching geodesics. Does $X$ have finite dimension? Is any such finite-dimensional $X$ a topological manifold?
\end{quest}

If such a space $X$ has finite dimension $n$, then  $X$ is a homology $n$-manifold and  if $n\leq 4$ then $X$ is a manifold
\cite{Bu_geo, Kra_3, T_buse, Ber-Bor}. For $n\geq 5$, the question remains open.
Finally, we mention that an answer to Busemann's question would follow from a purely topological conjecture of Bing and Borsuk \cite{BingBorsuk}.

\subsection{Minor generalizations} \label{subsec: gen}
An application of \cite[Theorem 1.1]{LS_improv}
extends
 Theorem~\ref{thm_gromov} and Corollary~\ref{cor:th} to other curvature bounds:
\begin{cor}
	Let $X$ be a $4$-dimensional topological manifold which is a $CAT(\kappa )$ space. Let $R>0$ be a real number with
	$R<\frac {\pi} {2\sqrt \kappa}$ if $\kappa>0$. Then, for any  $o\in X$, the open ball $B_R(o)$, the closed ball $\bar B_R(o)$ and the distance sphere $S_R(o)$
	are homeomorphic to the open unit ball in $\R^4$, to the closed unit ball in $\R^4$ and to $\mathbb S^3$, respectively.
\end{cor}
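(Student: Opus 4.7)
The plan is to invoke \cite[Theorem 1.1]{LS_improv} to reduce the $\CAT(\kappa)$ setting to the $\CAT(0)$ case already settled by Theorem~\ref{thm_gromov} and Corollary~\ref{cor:th}. Fix $o\in X$ and $R$ as in the statement. The improvement theorem yields, after a radial reparametrization of the metric on $\bar B_R(o)$, a complete $\CAT(0)$ space $(Y,d')$ together with a distinguished point $o'\in Y$ and a homeomorphism $\phi:\bar B_R(o)\to \bar B_{R'}(o')\subseteq Y$ that sends each distance sphere around $o$ onto a distance sphere around $o'$. Since the construction only reparametrizes the radial coordinate and does not alter the underlying topology, $Y$ inherits a topological $4$-manifold structure from $X$.

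Next, applying Theorem~\ref{thm_gromov} to the $\CAT(0)$ $4$-manifold $Y$ gives $Y\cong\R^4$, and Corollary~\ref{cor:th} then asserts that the distance function $d'_{o'}:Y\setminus\{o'\}\to(0,\infty)$ is a trivial fibre bundle with fibre $\mathbb S^3$. In particular, $S_{R'}(o')\cong\mathbb S^3$, and the closed and open metric balls in $Y$ around $o'$ are homeomorphic to the closed and open unit balls in $\R^4$, respectively. Transporting these homeomorphisms back along $\phi^{-1}$ yields the desired identifications of $S_R(o)$, $\bar B_R(o)$, and $B_R(o)$ with $\mathbb S^3$ and with the closed and open unit balls in $\R^4$.

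The main obstacle is extracting from \cite[Theorem 1.1]{LS_improv} precisely the structure needed here: a complete $\CAT(0)$ $4$-manifold whose distance spheres around a chosen basepoint correspond topologically, via a radial reparametrization, to those of the original $\CAT(\kappa)$ ball. Provided this reduction is carried out, the topological conclusions follow immediately from Corollary~\ref{cor:th}, with no further analysis required. Note that for $\kappa\leq 0$ one could in principle dispense with the improvement step and work directly with the induced $\CAT(0)$ structure on $X$; the virtue of the approach above is that it handles all values of $\kappa$ uniformly.
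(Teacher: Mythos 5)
Your proposal follows essentially the same route as the paper, whose own proof of this corollary is exactly the one-line reduction you describe: apply \cite[Theorem 1.1]{LS_improv}, whose radial reparametrization turns a ball about $o$ into a complete $\CAT(0)$ space while preserving the distance spheres about $o$, and then invoke Theorem~\ref{thm_gromov} and Corollary~\ref{cor:th}. The only point to phrase carefully is that the improvement should be applied so that the resulting complete $\CAT(0)$ space is an \emph{open} ball of $X$ containing $\bar B_R(o)$ (hence a boundaryless topological $4$-manifold, as an open subset of $X$), so that Theorem~\ref{thm_gromov} and Corollary~\ref{cor:th} are indeed applicable to it.
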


There are several notions of non-positive curvature for metric spaces. In all works cited above, Gromov's question has been studied for
CAT(0) spaces, even though the original question was posed for Busemann convex spaces, that is, geodesic spaces with a convex distance function.
Any CAT(0) space is Busemann convex. Conversely, examples of Busemann convex spaces that are neither CAT(0) nor  normed spaces are extremely rare,
compare \cite{IL}. Our ideas will apply
to this more general setting, once
 some structural results developed in \cite{LN_gcba} for CAT(0) spaces will be generalized to Busemann convex spaces.

\subsection{Comment on strategy and technique}
Our proof relies on the structural theory of geodesically complete spaces with upper curvature bounds developed in \cite{LN_gcba, LN_top}. Since any CAT(0) manifold is geodesically complete, this theory applies to the present situation. So-called `strainer maps', first appearing in \cite{BGP}
and defined by distance functions to points, are particularly useful. It has been verified \cite{LN_top} that for any point $o\in X$
as in Theorem \ref{thm_main}, all sufficiently small distance spheres around $o$ are (pairwise homeomorphic) 3-manifolds. In order to get sufficient control of remote spheres, an extension of the theory of strainer maps by one additional `orthogonal but non-straining' coordinate is required. This extension may be useful beyond  the present work. We refer the reader to Section~\ref{sec_relstr}  and Section~\ref{sec: last} for more details. Here, we only formulate
a special case of Proposition \ref{prop_contrhemi},  essential for the proof of Theorem~\ref{thm_main}.

\begin{thm} \label{thm_sph}
	Let $X$ be a locally compact, geodesically complete CAT(0) space, let $o\in X$, and
	let $p\in X$ be at distance $R>0$ from $o$. Then there exists $\epsilon >0$ such that for all $0<s<\epsilon$, the intersection of the distance sphere
	$S_s(p)$ with the closed ball $\bar B_R(o)$ is contractible.
\end{thm}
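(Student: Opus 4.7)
The strategy is to transfer the question from $X$ to the space of directions $\Sigma_p X$, where it becomes a statement about a small metric ball in a CAT(1) space.

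Let $\bar\xi \in \Sigma_p X$ denote the initial direction of the geodesic $[p,o]$. Using local compactness, first choose $\epsilon < R$ so that $\bar B_\epsilon(p)$ is compact. For $0 < s < \epsilon$, geodesic completeness of $X$ together with uniqueness of CAT(0) geodesics make the exponential-type map $\Phi_s : \Sigma_p X \to S_s(p)$, $\xi \mapsto \gamma_\xi(s)$, a continuous bijection, where $\gamma_\xi$ is the unit-speed geodesic issuing from $p$ with initial direction $\xi$. Compactness of $\Sigma_p X$ (standard in the locally compact, geodesically complete CAT(0) setting) upgrades this to a homeomorphism.

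The heart of the argument will be the sharp identification
\[
\Phi_s^{-1}\bigl(S_s(p) \cap \bar B_R(o)\bigr) \;=\; \bar B_{\alpha_0}(\bar\xi) \subseteq \Sigma_p X, \qquad \alpha_0 := \arccos\frac{s}{2R},
\]
obtained from the two equivalent formulations of the CAT(0) inequality applied to the geodesic triangle with vertices $p,\,o,\,\gamma_\xi(s)$, whose Alexandrov angle at $p$ equals $\angle(\xi,\bar\xi)$. The angle form (the real angle at $p$ is at most the Euclidean comparison angle) gives the inclusion $\subseteq$: if $d(o,\gamma_\xi(s)) \leq R$, the law of cosines in the comparison triangle forces the comparison angle at $p$ to be at most $\arccos(s/(2R))$. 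The hinge form gives $\supseteq$: if $\angle(\xi,\bar\xi) \leq \alpha_0$, then the comparison estimate yields $d(o,\gamma_\xi(s)) \leq \sqrt{s^2 + R^2 - 2sR\cos\alpha_0} = R$.

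Since $\Sigma_p X$ is CAT(1) and the constraint $0 < s < R$ ensures $\alpha_0 \in (0,\pi/2)$, the standard fact that closed metric balls of radius strictly less than $\pi/2$ are convex in CAT(1) spaces applies to $\bar B_{\alpha_0}(\bar\xi)$. The unique geodesic contraction onto the center $\bar\xi$ then gives a continuous deformation retraction of $\bar B_{\alpha_0}(\bar\xi)$ to a point, which via $\Phi_s$ contracts $S_s(p) \cap \bar B_R(o)$ onto $\gamma_{\bar\xi}(s) \in [p,o]$. The only step requiring real care is the sharp set equality in the second paragraph; what makes the whole plan succeed is precisely the Euclidean identity $\sqrt{s^2 + R^2 - 2sR \cdot s/(2R)} = R$, which pins down the cutoff $\alpha_0$ as the radius of a genuine metric ball in $\Sigma_p X$ rather than merely sandwiching the set between two such balls.
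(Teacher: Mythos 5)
There are two genuine gaps, and each by itself breaks the argument. First, the map $\Phi_s:\Sigma_pX\to S_s(p)$, $\xi\mapsto\gamma_\xi(s)$, is not known to be well defined, let alone a homeomorphism. ``Uniqueness of CAT(0) geodesics'' is uniqueness of the geodesic between two given \emph{endpoints}; it says nothing about uniqueness of a geodesic with a prescribed \emph{initial direction}. Upper curvature bounds do not prevent branching (trees, Euclidean cones over graphs are locally compact, geodesically complete CAT(0) spaces), so a direction $\xi$ need not determine the point $\gamma_\xi(s)$, and if a geodesic from $p$ branches at distance $\delta<s/2$ on the segment towards $o$, one of the two continuations with initial direction $\bar\xi$ ends at distance $R+s-2\delta>R$ from $o$. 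Even where $\Phi_s$ is well defined, continuity of its inverse (the ``logarithm'' $S_s(p)\to\Sigma_pX$) is a well-known delicate point: in the GCBA setting one only knows that small spheres are \emph{homotopy equivalent} to $\Sigma_pX$, and proving even that requires Gromov--Hausdorff approximation by the tangent cone together with uniform local contractibility and Petersen's stability theorem --- which is exactly the route the paper takes (blow up at $p$, identify the limit hemisphere with $\bar B_{\pi/2}(v)$ in the CAT(1) link of the tangent cone, then transfer contractibility back via the strainer retractions and \cite[Theorem~9]{P_gh}), rather than via an exponential homeomorphism.

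Second, the ``sharp identification'' $\Phi_s^{-1}\bigl(S_s(p)\cap\bar B_R(o)\bigr)=\bar B_{\alpha_0}(\bar\xi)$ is false, and the step you flag as the heart of the proof is justified with the comparison inequality pointing the wrong way. In a CAT(0) space the hinge/angle comparison gives $d(o,\gamma_\xi(s))\ \geq\ \sqrt{R^2+s^2-2Rs\cos\angle(\xi,\bar\xi)}$ (the inequality $\leq$ for hinges is Toponogov's theorem, i.e.\ a \emph{lower} curvature bound), so from $\angle(\xi,\bar\xi)\le\alpha_0$ you cannot conclude $d(o,\gamma_\xi(s))\le R$; only your inclusion $\subseteq$ is valid. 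Concretely, in the hyperbolic plane (a locally compact, geodesically complete CAT(0) space) the set in question is the ball of radius $\alpha_*=\arccos\bigl(\coth(R)\tanh(s/2)\bigr)$, which is strictly smaller than $\alpha_0=\arccos\bigl(s/(2R)\bigr)$ for every $0<s<2R$; so the claimed equality fails for all small $s$, not just for some. In a general CAT(0) space the sublevel set $\{\xi:\ d(o,\gamma_\xi(s))\le R\}$ need not be a metric ball in $\Sigma_pX$, nor convex, and without that exact structure the contraction along geodesics of $\Sigma_pX$ has no basis. The Euclidean picture you use only becomes correct after rescaling by $1/s$ and passing to the tangent cone, where $d_o-R$ converges to a Busemann function and the hemisphere becomes the convex ball $\bar B_{\pi/2}(v)$ in the link; the remaining (and substantial) work in the paper is precisely to pass from contractibility of this limit to contractibility of the actual sets $S_s(p)\cap\bar B_R(o)$.
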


\includegraphics[scale=0.5,trim={0cm 1cm 0cm 2cm},clip]{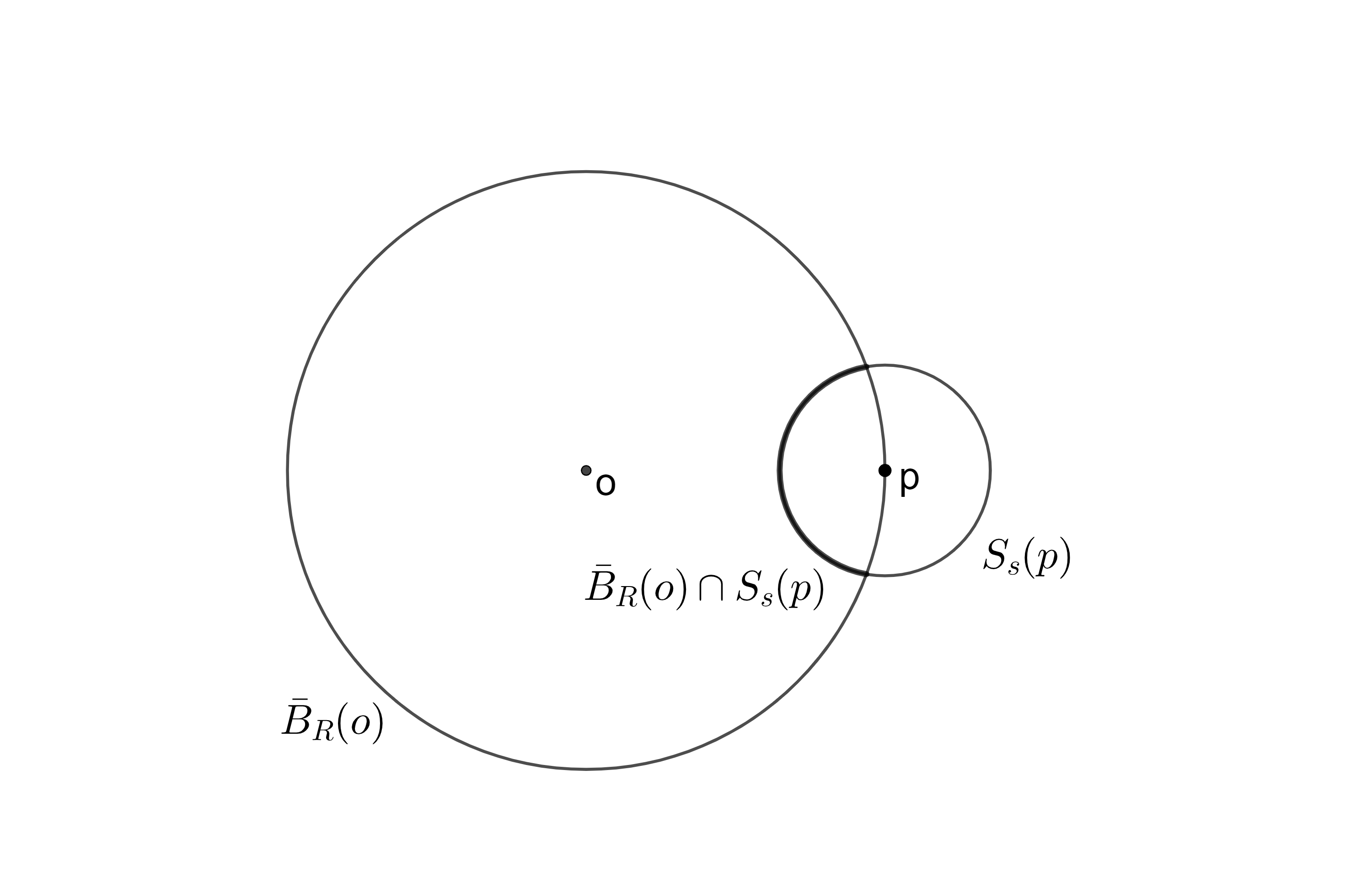}

\subsection{Acknowledgments}
Alexander Lytchak and Stephan Stadler were  supported by DFG grant SPP 2026. Koichi Nagano is supported by JSPS KAKENHI Grant Number 20K03603.
 The authors are grateful to Ronan Conlon for useful comments and to Tadeusz Januskiewicz
 for helpful suggestions including Corollary~\ref{cor-jan}.

\section{Preliminaries}

\subsection{Metric spaces}
We refer the reader to \cite{AKP,BBI,BH} for general background.
We denote by $d$ the distance in a metric space $X$. For  $x \in  X$,
 we denote by $d_x$ the distance function  $d_x(\cdot)=d(x,\cdot)$.
For $x\in X$ and $r>0$, we denote by $B_r(x)$ and $\bar B_r(x)$ the open, respectively, closed $r$-ball around $x$.
Similarly, $B_r(A)$ denotes the open $r$-neighborhood of a subset $A\subset X$.
Moreover, $S_r(x)$ denotes the $r$-sphere around $x$ and by $\dot B_r(x)$ we denote the punctured $r$-ball $B_r(x)\setminus\{x\}$.
For $\lambda>0$, we denote by  $ \lambda \cdot  X$ the metric space resulting from $X$ by rescaling the
metric by $\lambda$.
A \emph{geodesic}
is an isometric embedding of an interval. A \emph{triangle} is a union of three geodesics connecting three points.
$X$  is \emph{a  geodesic metric space} if
any pair of   points of $X$
is connected by a geodesic.
It is \emph{geodesically complete} if every isometric embedding of an interval extends to a
locally isometric embedding of $\R$.

A map $F:X\to Y$ between metric spaces is called \emph{$L$-Lipschitz} if
$d(F(x),F(\bar x))\leq L \cdot d(x,\bar x)$,  for all $x,\bar x \in X$.

The map $F$ is called
\emph{$L$-open} if the following condition holds.
For any   $x\in X$  and any $r>0$ such  that the closed ball  $\bar B_{Lr} (x)$ is complete,
 we have the inclusion
$B_{r} (F(x)) \subset F(B_{Lr} (x))$.

An \emph{ANR} will denote an \emph{absolute neighborhood retract}.
 For finite dimensional metric spaces, the only case relevant here, being an \emph{ANR} is equivalent to being locally contractible \cite{Hu}.

\subsection{Spaces with an upper curvature bound}

For $\kappa \in \R $, let $R_{\kappa}  \in (0,\infty] $ be the diameter of the  complete, simply connected  surface $M^2_{\kappa}$
of constant curvature $\kappa$. A complete  metric space is called a \emph{CAT($\kappa$) space}
if any pair of its points with distance $<R_{\kappa}$ is connected by a geodesic and if
 all triangles
with perimeter $<2R_{\kappa}$
are not thicker than
 the \emph{comparison triangle} in $M_{\kappa} ^2$.
A  metric space
is called a \emph{space with  curvature bounded above by  $\kappa$}
if   any point has a  CAT($\kappa$) neighborhood.
We refer to \cite{AKP,BBI,BH} for basic facts about such spaces.

For any CAT($\kappa$) space  $X$,
the angle between each pair of geodesics starting at the same point
is well defined. The \emph{space of directions}
 $\Sigma _x X$ at  $x\in X$,  equipped with the angle metric,
is a CAT(1) space.
The Euclidean cone
over $\Sigma _x$ is a CAT(0) space.
It is denoted by $T_x X$ and called
the \emph{tangent space} at $x$ of $X$. Its tip will be denoted by $o_x$.

 Let $x,y,z$ be three points at pairwise distance $<R_{\kappa}$ in a CAT($\kappa$) space $X$.
Whenever $x\neq y$, the geodesic between $x$ and $y$ is unique and will be denoted
by $xy$.   For $y,z \neq x$, the angle at $x$ between $xy$ and $xz$
will be denoted by $\angle yxz$.

In a CAT($\kappa$) space $X$, all balls of radii  smaller $  \frac {R_{\kappa}} 2$ are convex, hence $X$ is locally contractible. In fact, $X$ is an ANR   \cite[Theorem 3.2]{Linus}.

\section{Geometric topology}

\subsection{Homology manifolds} \label{subsec: homman}
 Denote by $D^n$ the closed unit ball in $\R^n$.

Let $M$ be a locally compact, separable metric space
of finite topological dimension.
We say that
$M$ is a
{\em homology $n$-manifold with boundary}
if
for any $p \in M$ we have a point $x \in D^n$
such that
the local homology
$H_{\ast}(M,M \setminus \{ p \})$ at $p$
is isomorphic to $H_{\ast}(D^n,D^n \setminus \{ x \})$.
The {\em boundary} $\partial M$  of $M$
is defined as the set of all points
at which the $n$-th local homologies are trivial.
In the case where the boundary of $M$ is empty,
we simply say that
$M$ is a
{\em homology $n$-manifold}.

If $M$ is a homology $n$-manifold with boundary then $\partial M$
is a closed subset of $M$ and it is  a homology $(n-1)$-manifold
by \cite{Mitch}.

Any homology $n$-manifold (with boundary) has dimension $n$. For $n\leq 2$,  we have   the following   theorem of Moore \cite[Chapter IX]{rwilder}.
\begin{thm} \label{thm: bing}
Any homology $n$-manifold with $n\leq 2$ is a topological manifold.
\end{thm}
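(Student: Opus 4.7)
The plan is to handle the theorem by cases in $n$, with the substantive content living in dimension two. For $n=0$, the local homology hypothesis forces every point to be isolated, so $M$ is discrete and trivially a $0$-manifold. For $n=1$, I would argue that every point $p\in M$ has a connected open neighborhood $U$ of topological dimension $1$ which is locally compact, and the condition $H_\ast(U,U\setminus\{p\})=H_\ast(\R,\R\setminus\{0\})$ means $p$ separates $U$ into exactly two ``ends'' in a homological sense. Combined with the fact that a locally compact, separable, $1$-dimensional, locally connected space with this separation property is a topological $1$-manifold (by the standard classification of $1$-dimensional Peano continua), this settles $n=1$.

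For $n=2$, the core case, the plan is to show that every point $p\in M$ admits arbitrarily small open neighborhoods homeomorphic to $\R^2$. First I would establish local path-connectedness and local simple connectedness: the hypothesis $H_1(M,M\setminus\{p\})=0$ and $H_2(M,M\setminus\{p\})=\Z$, together with the long exact sequence applied to small neighborhoods, shows that small punctured neighborhoods of $p$ have the homology of a circle. Since $M$ has topological dimension $2$, I would next invoke a Jordan-curve-theorem type argument in the ANR setting to extract, around each $p$, a small topological circle $C$ in $M\setminus\{p\}$ which generates $H_1$ of a punctured neighborhood.

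Once such a circle is produced, I would appeal to a local Schoenflies-type statement: a Jordan curve in a $2$-dimensional homology $2$-manifold bounds a topological disk. The homological hypothesis rules out pathologies such as Wada-type continua separating the circle from itself in exotic ways, and the $2$-dimensionality prevents the existence of the ``ghastly'' examples that occur in dimension three and higher. Iterating this with shrinking neighborhoods produces a neighborhood basis of $p$ consisting of topological disks, which is exactly what is needed for $M$ to be a $2$-manifold at $p$.

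The principal obstacle is the passage from the purely homological data to genuine topological structure, namely locating actual Jordan curves around each point and showing they bound disks. In higher dimensions this step fails outright (non-manifold homology manifolds abound), so the argument must crucially exploit the one-dimensional nature of codimension-one separations in a surface, most efficiently via Zippin's and Wilder's classical characterizations of the plane and of $2$-manifolds. For this reason, in practice I would quote Moore's theorem from Wilder's monograph rather than redo the intricate continuum-theoretic arguments from scratch.
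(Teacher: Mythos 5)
Your final move---quoting Moore's theorem from Chapter IX of Wilder's monograph---is exactly what the paper does: it offers no independent proof of this statement, simply citing it as a classical theorem of Moore from Wilder's \emph{Topology of Manifolds}. Your preliminary sketch (Jordan curves, a local Schoenflies statement, and the $n=0,1$ cases) is only heuristic and glosses over genuine work such as deriving local connectivity from the homological hypotheses, but since you explicitly defer to the classical source for precisely that continuum-theoretic content, your approach coincides with the paper's.
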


A \emph{homology $n$-sphere} is a homology $n$-manifold $X$ with the same homology as the $n$-sphere:    $H_{\ast} (X)=H_{\ast} (\mathbb S^n)$.

\subsection{Uniform local contractibility}

A function $\rho:[0,r_0) \to [0,\infty )$ is called a \emph{contractibility function}
if it is continuous at $0$ with $\rho (0)=0$ and for all $t\in[0,r_0)$ holds $\rho (t) \geq t$ \cite{P_gh}.

 \begin{defn} \label{defn: glob}
We say that a family  $\mathcal F$ of metric spaces is \emph{uniformly locally contractible}
if  there exists a contractibility function $\rho :[0,r_0) \to [0,\infty )$ such that for any space $X$ in the family $ \mathcal F$,
any point $x\in X$ and any $0<r<r_0$, the ball $B_r(x)$ is contractible within the ball $B_{\rho (r)} (x)$.
\end{defn}

For example, the family of all  $\CAT(\kappa )$ spaces is uniformly locally contractible with $\rho :[0, \frac {\pi }{\sqrt  \kappa} ) \to [0,\infty)$ being the identity map.

Here is a special case of   \cite[Theorem~A]{Petersen}, \cite[Theorem~9]{P_gh}:
\begin{thm} \label{thm: petersen}
For any natural number $n$ and any family $\mathcal F$ of uniformly locally contractible metric spaces of dimension at
most $n$,
there exists some $\delta  >0$ such that any  pair of spaces $X,Y \in \mathcal F$,
with  Gromov--Hausdorff distance  at most $\delta $ is homotopy equivalent.
\end{thm}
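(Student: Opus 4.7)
The plan is to implement the classical Hausdorff approximation via nerves strategy, producing continuous maps between $X$ and $Y$ whose compositions are uniformly close to the identities, and then to upgrade metric closeness into honest homotopies using the uniform local contractibility. Fix a contractibility function $\rho$ witnessing uniform local contractibility of $\mathcal F$, and suppose $X, Y \in \mathcal F$ satisfy $d_{GH}(X,Y) < \delta$ for some $\delta$ to be chosen below.

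First I would choose a scale $\eta > 0$ much smaller than $r_0$ (and a corresponding $\delta \ll \eta$) and pick a maximal $\eta$-separated net $\{x_i\}$ in $X$. Setting $U_i := B_{2\eta}(x_i)$ yields an open cover whose nerve $K_X$ is a simplicial complex of dimension at most $n$, after a standard refinement using that $\dim X \le n$ (the multiplicity of the cover can be bounded by $n+1$). A subordinate partition of unity yields a canonical continuous map $\pi_X : X \to K_X$. Using a $\delta$-approximation, to each $x_i$ assign a point $y_i \in Y$ with $|d(y_i,y_j) - d(x_i,x_j)| < 2\delta$. Extending $\{x_i \mapsto y_i\}$ simplex by simplex: on a $k$-simplex spanned by $x_{i_0},\dots,x_{i_k}$, the already-defined image of its boundary lies in some ball around $y_{i_0}$ whose radius $r_k$ is bounded by the $k$-fold iterate of $\rho$ starting from $O(\eta)$; the local contractibility within that ball provides the extension, yielding a continuous simplicial extension $F_X : K_X \to Y$. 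Define $f := F_X \circ \pi_X : X \to Y$, and construct $g : Y \to X$ analogously.

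To see that $g \circ f$ is homotopic to $\operatorname{id}_X$, observe that by construction $d(g(f(x)), x)$ is controlled by $\rho^{(n)}(C\eta)$, which can be made smaller than any prescribed threshold by shrinking $\eta$ and $\delta$. The straight-line homotopy between $\operatorname{id}_X$ and $g \circ f$ does not make sense metrically, but the same nerve trick works on the product: consider the cover $\{U_i \times [0,1]\}$ of $X \times [0,1]$, define the homotopy on $\{x_i\} \times \{0,1\}$ by $\operatorname{id}$ and $g \circ f$, and extend inductively over skeleta, each extension landing in a ball of radius bounded by an iterate of $\rho$. The same argument yields $f \circ g \simeq \operatorname{id}_Y$, so $X$ and $Y$ are homotopy equivalent.

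The main obstacle, and the only delicate aspect, is the bookkeeping of constants: one must fix the order in which $n$, $\rho$, $\eta$, and $\delta$ are chosen so that every ball appearing as a target for an extension step has radius below $r_0$, and so that the $n$ iterated inflations by $\rho$ required for a dimension-$n$ nerve do not exceed the scale at which the homotopy between $g \circ f$ and $\operatorname{id}_X$ has to be realized. The finite-dimensionality assumption enters critically at two points: it caps the dimension of the nerves so the inductive extension terminates after $n$ steps with controlled radii, and it ensures the multiplicity of the relevant covers is bounded independently of the space in $\mathcal F$. Once these quantifiers are arranged in the correct order, the verification is a routine induction on skeleta.
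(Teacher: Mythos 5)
The paper does not actually prove this statement: it is quoted as a special case of Petersen's theorem (\cite[Theorem A]{Petersen}, \cite[Theorem 9]{P_gh}), so your attempt has to be judged against Petersen's argument, which it follows in outline — nerves of fine covers of dimension at most $n$, skeletal extension into the other space controlled by iterates of the contractibility function $\rho$, and maps $f:X\to Y$, $g:Y\to X$ whose composites are uniformly close to the identities. Up to that point the sketch is the standard and correct strategy (the only minor point, which you do flag, is that the balls around a maximal $\eta$-net need not have multiplicity $n+1$, so one must pass to a refinement of order $\le n+1$ supplied by covering dimension and keep track of its mesh).

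The genuine gap is in the last step, where metric closeness must be upgraded to a homotopy. The space $X\times[0,1]$ is not a simplicial complex, so there are no skeleta over which to ``extend inductively''; what your construction actually yields is a map from the nerve of the cover $\{U_i\times[0,1]\}$ into $X$, and composing with the canonical map $X\times[0,1]\to$ nerve gives a map that is only \emph{close} to $\mathrm{id}_X$ and $g\circ f$ on the two ends — prescribing values at the net points $\{x_i\}\times\{0,1\}$ does not force the correct boundary values anywhere else. To convert ``close'' into ``equal up to homotopy'' you would need exactly the statement you are in the middle of proving (that maps of $X$ into a space of the family which are $\delta(\rho,n)$-close are homotopic), so as written the argument is circular at this point. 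The standard repair is the controlled Kuratowski--Dugundji/Hu extension theory for maps of finite-dimensional metric pairs into locally $n$-connected spaces: one covers $X\times(0,1)$ by a Dugundji-type cover whose mesh tends to $0$ near $X\times\{0,1\}$, maps its nerve into $X$ by the same skeletal induction with vertex values sampled from $\mathrm{id}_X$ and $g\circ f$ at nearby points, and obtains a genuine extension across the ends by continuity; the quantitative bookkeeping (the scale of the resulting homotopy is an $n$-fold $\rho$-iterate, uniform over $\mathcal F$) is the substance of Petersen's proof. Without this ingredient — or an explicit quantitative ``nearby maps from finite-dimensional domains are homotopic'' lemma with constants depending only on $n$ and $\rho$ — the proposal does not close.
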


The homotopy equivalences and the corresponding homotopies in Theorem~\ref{thm: petersen} can be chosen arbitrary close to the identity \cite{P_gh}.

When dealing with a family of   fibers of  some map, we will use the following more convenient variant of Definition~\ref{defn: glob} \cite{Ungar}.

\begin{defn} \label{defn: loc}
Let $F:X\to Y$ be a  map between metric spaces.
We say that  \emph{$F$ has uniformly locally contractible fibers}
if the following condition holds true for any point $x\in X$ and every neighborhood $U$ of $x$ in $X$.
There exists a neighborhood $V\subset U$ of $x$ in $X$ such that for any fiber
$F^{-1}(y)$ with non-empty intersection $F^{-1} (y)\cap V$, this intersection is contractible in $F^{-1} (y)\cap U$.
\end{defn}

 For    $X$ compact, a map $F:X\to Y$ has uniformly locally contractible fibers in the sense of Definition~\ref{defn: loc}
if and only if the family of the fibers is uniformly locally contractible in the sense of Definition~\ref{defn: glob}.

\subsection{Fibrations and fiber bundles}
A   map  $F:X\to Y$ between metric spaces is called   a \emph{Hurewicz fibration}
if it  satisfies the homotopy lifting property with respect to all spaces \cite[Section 4.2]{Hatcher}, \cite{Ungar}.

The map $F$ is called open if the images of open sets are open.

We will use the following result to recognize Hurewicz fibrations.

\begin{thm}[{\cite[Theorem 2]{Ferry_strongly}; \cite[Theorem 1]{Ungar}}] \label{thm: hurewicz}
Let $X,Y$ be finite-dimensional,   compact metric spaces and let $Y$ be an  $\ANR$.
Let $F:X\to Y$ be an open, surjective   map   with  uniformly locally contractible fibers.
Then  $X$ is an ANR and $F$ is a Hurewicz fibration.
\end{thm}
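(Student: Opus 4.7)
The plan is to establish the two conclusions in turn: first that $X$ is an $\ANR$, and then, using this, that $F$ is a Hurewicz fibration.

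For the first step, since $X$ is finite-dimensional and locally compact (as a compact metric space), it suffices by the Hanner--Borsuk recognition theorem to verify that $X$ is locally contractible. Fix $x \in X$ and a neighborhood $U$ of $x$. By Definition~\ref{defn: loc}, choose a neighborhood $V \subset U$ of $x$ such that for every $y \in Y$ with $F^{-1}(y) \cap V \neq \emptyset$, the set $F^{-1}(y) \cap V$ is contractible inside $F^{-1}(y) \cap U$. Since $Y$ is an $\ANR$, $Y$ is locally contractible, and since $F$ is open, we can shrink further to a neighborhood $W$ of $x$ whose image lies in a contractible neighborhood $V_Y$ of $F(x)$ in $F(U)$. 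The goal is now to combine a contraction $h \colon V_Y \times [0,1] \to V_Y$ of $V_Y$ to $F(x)$ with the fiber-wise contractions to produce a contraction of $W$ inside $F^{-1}(V_Y) \cap U$. The horizontal component is lifted via openness of $F$ and a Michael-type selection argument, while the vertical component uses the uniform local contractibility of the fibers; uniformity (rather than mere pointwise contractibility) is essential to keep the fiber-wise contractions continuous in the base parameter.

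For the second step, one invokes a local-to-global criterion: a map between paracompact Hausdorff spaces that is a Hurewicz fibration over every member of an open cover of the base is itself a Hurewicz fibration. It therefore suffices to show that every $y \in Y$ has a neighborhood $V$ over which $F^{-1}(V) \to V$ is a Hurewicz fibration. With both $X$ and $Y$ now known to be $\ANR$s, one can work locally in the compact, finite-dimensional, uniformly locally contractible family of fibers $\{F^{-1}(y')\}_{y' \in V}$. Using Theorem~\ref{thm: petersen} and its accompanying ``small homotopies'' refinement, one obtains homotopy equivalences between nearby fibers which vary continuously in the parameter $y'$, together with a continuous section $s \colon V \to F^{-1}(V)$ of $F$ (again via openness of $F$ combined with the $\ANR$ extension property over the compact base). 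These data assemble into a fiber-preserving deformation of $F^{-1}(V)$ onto $s(V)$, which exhibits $F^{-1}(V) \to V$ as a Dold fibration between $\ANR$s; by a theorem of Dold, any such map is a Hurewicz fibration.

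The hard part is the coordination of the purely topological ingredients with the metric hypothesis of uniform local contractibility of fibers, which is what distinguishes this statement from easier versions where $F$ is already known to be locally trivial or proper with constant fiber homotopy type. Specifically, the main obstacle is the construction of a continuous family of fiber-wise contractions parametrized by the base, so that the combined lift produces a genuine homotopy rather than just a pointwise one; here one needs to exploit the compactness of $X$ together with the fact that on a compact base the uniform local contractibility in the sense of Definition~\ref{defn: loc} agrees with the global version of Definition~\ref{defn: glob}, so that quantitative estimates available from Theorem~\ref{thm: petersen} can be applied uniformly.
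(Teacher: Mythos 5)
You should first note that the paper does not prove this statement at all: it is imported verbatim from Ferry and Ungar, so the only meaningful check is whether your outline would constitute a proof. It would not; there are two decisive gaps in your second step, and the first step also leans on an unproved continuity claim.

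The most serious problem is the assembly step: a \emph{fiber-preserving} deformation of $F^{-1}(V)$ onto the section image $s(V)$ would retract each fiber $F^{-1}(y')$ onto the single point $s(y')$, i.e.\ it would force all fibers to be contractible. In the very application this theorem serves in the paper the fibers are $2$-spheres, so the object you propose to build cannot exist; what is actually needed is a fiber homotopy trivialization of $F^{-1}(V)\to V$, and nothing in your outline produces one. A local section is indeed available (openness of $F$ makes $y\mapsto F^{-1}(y)$ lower semicontinuous, and with finite dimension and equi-local contractibility Michael's finite-dimensional selection theorem applies), but Theorem~\ref{thm: petersen} only yields a homotopy equivalence between two individual fibers that are Gromov--Hausdorff close; making these equivalences \emph{continuous in the base parameter} is precisely the content of Ferry's ``strong regularity'' and Ungar's slicing arguments, and you assert it rather than prove it. The same continuity-in-the-parameter issue is hidden in your first step, where the lift of the base contraction must be coordinated continuously with the fiberwise contractions.

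Second, the final reduction ``a Dold fibration between ANRs is a Hurewicz fibration'' is false. Take $E=\bigl([0,1]\times\{0\}\bigr)\cup\bigl(\{1\}\times[0,1]\bigr)\subset\R^2$ with $p(x,y)=x$ mapping onto $[0,1]$: both spaces are compact, finite-dimensional ANRs, and $p$ is fiber homotopy equivalent over $[0,1]$ to $\mathrm{id}_{[0,1]}$ (squash the vertical segment fiberwise), hence a Dold fibration; but the homotopy $h_t(x,y)=(1-t)x$ admits no lift extending the identity, since for $y>0$ a lift would have to satisfy $H_t(1,y)=(1-t,0)$ for $t>0$ while $H_0(1,y)=(1,y)$. (This map is not open, so it does not contradict the theorem itself, but it does invalidate the general principle your last step relies on.) So even granting all the local data you describe, your route does not reach the Hurewicz conclusion; the cited proofs of Ferry and Ungar proceed differently, via controlled (estimated) homotopy equivalences between nearby fibers, respectively the slicing structure property, exactly to supply the parametrized continuity and the genuine homotopy lifting property that are missing here. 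The local-to-global step you quote (local Hurewicz fibrations over a paracompact base are Hurewicz fibrations) is correct, but it is the only part of the second half that is.
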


In some situations, Hurewicz fibrations turn out to be fiber bundles.
We will rely on the following.

\begin{thm} [{\cite[Theorems 1.1-1.4]{Ferry_alex}; \cite[Theorem 2]{Raymond-fibering}}] \label{thm: trivialbundle}
Let $X,Y$ be finite-dimensional locally compact $\ANR$s.
Let $F:X\to Y$ be a Hurewicz fibration. If all fibers of $F$ are closed $n$-manifolds
then $F$ is a locally trivial fiber bundle.
\end{thm}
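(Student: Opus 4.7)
The plan is to promote the Hurewicz fibration $F : X\to Y$ to a local product structure in two stages. First, over a sufficiently small contractible neighborhood of each base point, I use only the homotopy lifting property to produce a fiberwise homotopy equivalence with a trivial fibration. Second, I upgrade this fiber-homotopy trivialization to a genuine fiberwise homeomorphism using the closed $n$-manifold hypothesis together with an $\alpha$-approximation theorem for fibrations.

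Fix $y_0 \in Y$ and set $F_0 := F^{-1}(y_0)$, a closed topological $n$-manifold. Since $Y$ is a finite-dimensional locally compact \ANR, it is locally contractible, so I may choose arbitrarily small open neighborhoods $U$ of $y_0$ with compact closure $\bar U$ and a contraction $H : \bar U \times [0,1] \to \bar U$ with $H_0 = \id_{\bar U}$ and $H_1 \equiv y_0$. Lifting the homotopy $(x,t)\mapsto H(F(x),t)$, starting from the identity of $F^{-1}(\bar U)$, via the homotopy lifting property yields a deformation of $F^{-1}(\bar U)$ covering $H$ whose time-one map is a retraction $r : F^{-1}(\bar U) \to F_0$. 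A symmetric application of path-lifting, using the reversed paths $t\mapsto H(y,1-t)$ in $\bar U$ starting at the points of $F_0$, produces a map $\varphi : \bar U \times F_0 \to F^{-1}(\bar U)$ with $F\circ\varphi = \mathrm{pr}_{\bar U}$, and a further application of the HLP shows that $(r,F)$ and $\varphi$ are fiber-homotopy inverses. Hence the restriction of $F$ over $\bar U$ is fiber-homotopy equivalent to the trivial projection $\bar U \times F_0 \to \bar U$.

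The main obstacle is upgrading $\varphi$ to a homeomorphism. For each $y\in \bar U$, the restriction $\varphi_y : \{y\}\times F_0 \to F^{-1}(y)$ is a homotopy equivalence between closed topological $n$-manifolds. By shrinking $U$ and using the compactness of $F^{-1}(\bar U)$ together with the finite dimensionality and \ANR structure of $X$, one arranges that, in a fixed metric on $F^{-1}(\bar U)$, the family $\{\varphi_y\}$ is a uniformly fine homotopy equivalence, i.e.\ each $\varphi_y$ is an $\varepsilon$-map for $\varepsilon$ arbitrarily small. Ferry's $\alpha$-approximation theorem for closed $n$-manifolds and its parametrized refinement for families over a compact base, due to Chapman--Ferry and Raymond, then provide a homeomorphism arbitrarily close to $\varphi$ which still commutes with the projection to $\bar U$. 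This controlled-topology step is the deep ingredient and the principal difficulty; it is precisely what is imported from the cited references and cannot be replaced by purely fibration-theoretic arguments. Performing the construction near every $y_0\in Y$ produces the desired locally trivial fiber bundle structure on $F$.
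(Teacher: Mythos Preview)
The paper does not prove this theorem at all; it is quoted as a black box from the cited references \cite{Ferry_alex} and \cite{Raymond-fibering} and then applied in the proofs of Corollary~\ref{cor:th} and Corollary~\ref{cor-jan}. There is therefore no argument in the paper against which to compare your proposal.

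As a brief comment on the proposal itself: the two-stage outline (fiber-homotopy trivialization over a contractible neighborhood via the homotopy lifting property, then promotion to a fiberwise homeomorphism) is indeed the general shape of the arguments in the cited sources, but your second stage blurs a genuine dimension split. The Chapman--Ferry $\alpha$-approximation machinery you invoke is available only for closed manifolds of dimension $n\geq 5$; Raymond's theorem predates that technology and treats the low-dimensional fibers $n\leq 2$ (and, with additional input, $n=3$) by entirely different classical methods, while the case $n=4$ needs still further work. In addition, the claim that shrinking $U$ forces each $\varphi_y$ to be an $\varepsilon$-map does not follow from the construction as you have written it: a homotopy equivalence produced by lifting a contraction need not be close to a homeomorphism in any metric sense, and obtaining the controlled input required by the approximation theorems is part of the substance of Ferry's argument rather than a preliminary reduction.
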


\subsection{CAT(0) (homology) manifolds}\label{subsec_hommfds}

Following \cite{LN_gcba}, we will denote a locally compact, locally geodesically complete, separable space with an upper curvature bound
as GCBA. In this paper we are concerned with CAT(0) spaces which are homeomorphic to (homology) manifolds. We will call such spaces
{\em CAT(0) homology manifolds} and {\em CAT(0) manifolds} respectively. Every CAT(0) homology manifold is geodesically complete  \cite[Theorem~1.5]{LS_affine} and therefore GCBA.
Hence, we can rely on the results from \cite{LN_gcba, LN_top}.
For the local arguments of \cite{LN_gcba, LN_top}, the notion of a {\em tiny ball} played a role.
We point out that in a CAT(0) homology  manifold, a tiny ball is any ball of radius at most one. After rescaling, the  bound of 1 becomes irrelevant.

From \cite[Lemma~3.1, Corollary~3.4, Theorem~6.4]{LN_top} we infer:

\begin{prop} \label{prop_links}
Let $X$ be a CAT(0) homology $n$-manifold.
Then any space of directions $\Si_x X$ is a homology $(n-1)$-sphere. If $n\leq 4$, then
$\Si_x X$ is a topological manifold.
\end{prop}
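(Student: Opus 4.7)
The plan is to compute the local homology of $X$ at $x$ by means of the space of directions, exploiting the CAT(0) cone structure $T_x X = C(\Sigma_x X)$. Since $X$ is CAT(0) and geodesically complete (\cite[Theorem~1.5]{LS_affine}), any small ball $B_r(x)$ is contractible, and by geodesic completeness the punctured ball $B_r(x)\setminus\{x\}$ strong-deformation retracts onto the distance sphere $S_r(x)$ via geodesic projection from $x$. The long exact sequence of the pair $(B_r(x), B_r(x)\setminus\{x\})$ therefore gives
\[
H_k(X, X\setminus\{x\}) \;\cong\; \tilde H_{k-1}(S_r(x)) \quad \text{for all small $r>0$}.
\]
Since $X$ is GCBA, the rescaled spheres $r^{-1}S_r(x)$ Gromov--Hausdorff converge to $\Sigma_x X$, and both sides are uniformly locally contractible (being CAT(1) of dimension at most $n-1$); hence Theorem~\ref{thm: petersen} yields a homotopy equivalence $S_r(x) \simeq \Sigma_x X$ for $r$ sufficiently small. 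Combined with the homology $n$-manifold hypothesis $H_\ast(X, X\setminus\{x\}) \cong \tilde H_{\ast-1}(\mathbb S^{n-1})$, this shows $\Sigma_x X$ has the homology of $\mathbb S^{n-1}$.

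To upgrade this to the homology $(n-1)$-manifold statement, I would iterate the same cone argument inside $T_x X$. The tangent cone $T_x X$ itself is a geodesically complete CAT(0) space, and it inherits the homology $n$-manifold property from $X$ via the Gromov--Hausdorff approximation above. At a non-tip point $v \in T_x X$ corresponding to a direction $\xi \in \Sigma_x X$, the iterated tangent cone splits as $\R \times C(\Sigma_\xi \Sigma_x X)$, whose space of directions is the spherical suspension of $\Sigma_\xi \Sigma_x X$. Applying the cone computation from the first paragraph at $v$ and invoking the suspension isomorphism forces $\Sigma_\xi \Sigma_x X$ to be a homology $(n-2)$-sphere; running the cone computation one more time, now at $\xi$ inside $\Sigma_x X$, produces the required local homology of a point in $\mathbb S^{n-1}$.

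For the topological manifold assertion when $n \leq 4$, $\Sigma_x X$ is a homology manifold of dimension at most $3$. When that dimension is at most $2$, Moore's theorem (Theorem~\ref{thm: bing}) directly yields a topological manifold. The real obstacle is the case $n = 4$, which requires promoting a CAT(1) homology 3-manifold to a topological 3-manifold. This is precisely where \cite[Corollary~3.4, Theorem~6.4]{LN_top} enters: its proof combines the iterated cone/suspension structure established above (which, via the $n=2$ base case, shows that the spaces of directions of $\Sigma_x X$ are genuine topological 2-spheres) with the resolution of the 3-dimensional Poincar\'e conjecture to exclude non-manifold local models. I expect this last step, recognizing a 3-dimensional CAT(1) homology manifold as topological, to be the only genuinely delicate point; the rest reduces cleanly to the formal cone computation above.
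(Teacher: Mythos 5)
The paper does not actually prove this proposition: it is imported wholesale from \cite[Lemma~3.1, Corollary~3.4, Theorem~6.4]{LN_top}, so your decision to fall back on those results for the hardest step --- recognizing the $3$-dimensional link as a topological manifold when $n=4$ --- is exactly in line with the paper's treatment, and your overall outline (local homology via small spheres, blow-up to the tangent cone, induction over iterated links, Moore's theorem in dimensions at most $2$) is a reasonable reconstruction of how the result is obtained in \cite{LN_top}.

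However, the steps you do attempt to argue yourself contain genuine gaps. First, the punctured ball does not ``strong-deformation retract onto $S_r(x)$ via geodesic projection from $x$'': projection from $x$ moves points inward, and pushing points outward would require a continuous choice of geodesic extensions, which does not exist in general (extensions are highly non-unique). The correct mechanism is that $d_x$ is a $(1,\delta)$-strainer map on a tiny punctured ball, so by Theorem~\ref{thm: fibration} the punctured ball fibers over an interval with fibers the spheres $S_t(x)$, giving $\dot B_r(x)\simeq S_t(x)$. Second, the rescaled distance spheres are \emph{not} CAT(1) --- the induced metric on a distance sphere carries no curvature bound --- so the uniform local contractibility needed for Theorem~\ref{thm: petersen} is not free; establishing it is exactly the kind of statement that requires the strainer machinery (compare the effort spent on Proposition~\ref{prop_contrhemi} in this paper). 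Third, the assertion that $T_xX$ ``inherits the homology $n$-manifold property via the Gromov--Hausdorff approximation'' is not an argument: local homology is not stable under Gromov--Hausdorff convergence without precisely the uniform contractibility and stability control whose establishment is the content of the cited results of \cite{LN_top}, so as written this step essentially assumes what is to be proved. In short, as an account of where the proposition comes from your proposal is consistent with the paper, but as a self-contained proof it fails at these three points.
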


Any CAT(0) homology $n$-manifold is a topological $n$-manifold on the complement of a discrete subset \cite[Theorem 1.2]{LN_top}.  For $n\leq 3$,  a CAT(0)  homology $n$-manifold is a manifold homeomorphic to $\R^n$
\cite[Theorem 6.4]{LN_top}, \cite{T_tame}.
The Euclidean cone  over the Poincar\'e sphere is
a CAT(0) homology $4$-manifold which is not  a manifold.

Any CAT(0) homology  $n$-manifold is locally bilipschitz equivalent to $\R^n$ away from a closed set of Hausdorff dimension at most $n-2$, as follows from \cite[Theorem 1.2 and Section 10.2]{LN_gcba}.

\section{Strainer maps} \label{sec: strainer}
We  recall the definition and basic properties of strainer maps in the framework of CAT(0) spaces from  \cite{LN_gcba} and \cite{LN_top}.
 Originally,
strainer maps were introduced in \cite{BGP} to study Alexandrov spaces with curvature bounded below.

\subsection{Almost spherical directions}
Let $X$ be a locally compact and geodesically complete CAT(0) space. Let $v$ be a direction at a point $x\in X$. An {\em antipode} of $v$ is a direction $\hat v\in\Sigma_x X$ at distance at least $\pi$ from $v$. If $v$ has a unique antipode $\hat v$, then $\Sigma_x X$ splits isometrically as a spherical join
$\Sigma_x X\cong\{v,\hat v\}\ast Z$. More generally,  the subset $\Sigma^0$ of points with unique antipodes in $\Sigma _x X$  is isometric to  $\mathbb S^k$, for  some $k$, and $\Sigma ^0$ is a spherical join factor of
$\Sigma_x X$
 \cite[Corollary~4.4]{Lbuild}.

A quantitative version is provided by the notion of $\delta$-spherical points and tuples.
The direction $v\in\Sigma_x X$ is called {\em $\delta$-spherical}, if there exists some $\bar v\in \Sigma_x X$ such that
for any $w\in \Sigma_x X$
\[d(v,w)+d(w,\bar v) <\pi +\delta.\]
Moreover, we say that  $v$ and $\bar v$ are  \emph{opposite} $\delta$-spherical points.
A $\delta$-spherical direction $v$ has a set of antipodes of diameter at most $2\delta$, \cite[Lemma~6.3]{LN_gcba}.
Therefore, if $\delta$ is small, this `almost leads to a splitting' of $\Sigma_x X$ \cite[Proposition~6.6]{LN_gcba}.

 A   $k$-tuple  $(v_1,\ldots,v_k)$ of
points in $\Sigma_x X$ is called   \emph{$\delta$-spherical}
if there exists another $k$-tuple $(\bar v_i)$ in $\Sigma_x X$ with the following two properties.
\begin{itemize}
	\item For $1\leq i \leq k$, the directions  $v_i$ and $\bar v_i$  are opposite $\delta$-spherical.
	\item For  $1\leq i\neq j \leq k$, the distances
   $ d(v_i,\bar v_j), d(v_i, v_j), d(\bar v_i,\bar v_j)$ are less than  $\frac \pi 2 + \delta$.
\end{itemize}

Moreover,  $(\bar v_i)$ and $(v_i)$ are called \emph{opposite $\delta$-spherical $k$-tuples}.

\subsection{Strainers and strainer maps}
A $k$-tuple $(p_1,\ldots,p_k)$ is called a {\em $(k,\delta)$-strainer} at a point $x\in X\
\setminus\{p_1,\ldots,p_k\}$ if the starting directions $v_i \in \Si _xX $ of the geodesics
$xp_i$ constitute a $\delta$-spherical  $k$-tuple in $\Sigma_x X$.

Two $(k,\delta)$-strainers  $(p_i)$ and $(q_i)$  at $x$
are  \emph{opposite} if the corresponding $\delta$-spherical  $k$-tuples $(v_i)$ and $(w_i)$ are opposite in $\Sigma _x X$.

A $k$-tuple $(p_i)$ in $X$ is a  {\em $(k,\delta)$-strainer}  in  $A\subset X\setminus\{p_1,\ldots,p_k\}$ if $(p_i)$ is a  $(k,\delta)$-strainer at every point $x\in A$.

The set of all points $U\subset X\setminus\{p_1,\ldots,p_k\}$
at which $(p_i)$ is a $(k,\delta)$-strainer is open in $X$  \cite[Corollary 7.9]{LN_gcba}.

Each $k$-tuple $(p_i)$ yields a {\em distance map} $F:X\to \R^k$ via $F=(d_{p_1},...,d_{p_k})$.
If $(p_i)$ is a $(k,\delta)$-strainer on a subset $A\subset X$, then the associated distance map  $F$
is called a {\em $(k,\delta)$-strainer map} on $A$.

\subsection{Properties of strainer maps}
For $\delta   \leq \frac 1 {4\cdot k}$ and $L= 2\sqrt k$,
every $(k,\delta)$-strainer map $F:U\to \R^k$ on an open subset $U\subset \R^k$
is $L$-open and $L$-Lipschitz  \cite[Lemma~8.2]{LN_gcba}.

The building blocks for straining maps are the following two observations.
First, for any $\delta>0$ and any  $x\in X$ the function $d_x:\dot B_r(x)\to(0,r)$ is a $(1,\delta)$-strainer map if
$r$ is chosen small enough \cite[Proposition~7.3]{LN_gcba}. Secondly, let $F:U\to\R^k$ be a $(k,\delta)$-strainer map and let $p$ be a point in a fiber $\Pi$ of $F$.
Then there exists  $r>0$ and a neighborhood
$W$ of  $\dot B_r(p)\cap \Pi$ in $U$ such that the map $\hat F=(F,d_p) :W\to \R^{k+1}$
is a $(k+1, 4\delta)$-strainer map \cite[Proposition~9.4]{LN_gcba}.

Any $(k,\delta)$-strainer map on an open subset of a $k$-dimensional CAT(0) space $X$ provides a bilipschitz chart \cite[Corollary 11.2]{LN_gcba}. In general,  we have the following topological structure.

\begin{thm}[{\cite[Theorem~5.1 and Corollary~5.2]{LN_top}}] \label{thm: fibration}
	Let $U$ be an open subset of a GCBA space   $X$. Let $F:U\to \R^k$ be a $(k,\delta)$-strainer map, for some  $k$ and
	some $\delta <\frac {1} {20 \cdot k}$.
	Then any  $x\in U$ has arbitrary small  open contractible neighborhoods $V$, such that the restriction $F:V\to F(V)$ is a Hurewicz fibration with contractible fibers.

	If a fiber $F^{-1}  (b)$ is compact, then there exists an open neighborhood $V$ of $F^{-1} (b)$ in $U$ such  that
	$F:V\to F(V)$ is a Hurewicz fibration.

	If $U$ is a homology $n$-manifold, then any fiber    $F^{-1} (b)$ is a homology $(n-k)$-manifold.
\end{thm}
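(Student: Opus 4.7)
The plan is to handle the three assertions sequentially via induction on $k$, using that $F$ is $L$-Lipschitz and $L$-open together with the Ferry--Ungar criterion (Theorem~\ref{thm: hurewicz}) to recognize Hurewicz fibrations. The fact that $\CAT(\kappa)$ spaces are ANRs supplies the target- and total-space regularity needed for that criterion. The geometric mechanism throughout is the near-splitting of $T_y X$ into an approximate $\R^k$-factor spanned by the $\delta$-spherical tuple $(v_i)$ and its opposite $(\bar v_i)$, together with an orthogonal complement that is tangent to the fiber; convexity of each $d_{p_i}$ controls how much the two factors interact.

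For the first assertion I would fix $x \in U$ and take $V$ to be a small convex open ball around $x$ with $\bar V \subset U$ compact. Openness and the Lipschitz bound of $F|_V$ are immediate. The nontrivial input is uniform local contractibility of fibers in the sense of Definition~\ref{defn: loc}: at each $y \in V$ I would construct a contracting deformation of a small neighborhood of $y$ inside its fiber by sliding along geodesics whose initial directions are approximately orthogonal to all $v_i$. The $\delta$-spherical property keeps these trajectories approximately in the fiber, and convexity of the $d_{p_i}$ allows a small correction to keep them exactly there. After verifying continuity in $y$ using compactness of $\bar V$, Theorem~\ref{thm: hurewicz} delivers the Hurewicz fibration, and the same deformation retracts each fiber of $F|_V$ to a chosen point. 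For the compact-fiber assertion I would cover $F^{-1}(b)$ by finitely many such $V_{\alpha}$, set $V := \bigl(\bigcup_{\alpha} V_{\alpha}\bigr) \cap F^{-1}(W)$ for a small ball $W$ around $b$, and re-apply Theorem~\ref{thm: hurewicz}; uniform local contractibility of fibers is a local property that persists, openness is preserved, and $V$ is an ANR as an open subset of a $\CAT(\kappa)$ space.

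For the homology manifold assertion I would induct on $k$. The inductive step forgets the last coordinate of a $(k,\delta)$-strainer map: the first $k-1$ coordinates form a $(k-1,\delta)$-strainer map whose fibers are homology $(n-k+1)$-manifolds by induction, and the final coordinate restricts to a $(1,\delta')$-strainer on such a fiber, reducing to $k=1$. For $k=1$, that is, for a level set $\Pi = d_p^{-1}(t)$ in a homology $n$-manifold $U$, I would compute the local homology of $\Pi$ at $y \in \Pi$ by Mayer--Vietoris on a small ball $V$ around $y$ decomposed into the closed sublevel and superlevel sets of $d_p$. Each half is locally convex, hence contractible; the Hurewicz fibration from the first assertion identifies their intersection with $V \cap \Pi$; and chasing the long exact sequence of the pair $(V, V \setminus \{y\})$ against the homology-manifold structure of $U$ pins down $H_\ast(\Pi, \Pi \setminus \{y\})$ to that of $(\R^{n-1}, \R^{n-1} \setminus \{0\})$.

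The main obstacle is the $k=1$ case of the homology manifold assertion. Contractibility of fibers is a homotopical property and does not directly give Künneth-style control of the local homology $H_\ast(V, V \setminus \{y\})$, since a Hurewicz fibration need not be locally trivial as a bundle. The essential geometric input that rescues the argument is the convex decomposition of a small ball around $y$ into sublevel and superlevel halves along $\Pi$, coming from $\CAT(\kappa)$ convexity combined with the strainer condition; this converts the local-homology question into a diagram chase on the Mayer--Vietoris sequence.
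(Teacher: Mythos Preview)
The paper does not prove this theorem at all: it is quoted verbatim from \cite[Theorem~5.1 and Corollary~5.2]{LN_top} and used as a black box, so there is no proof here to compare your proposal against.

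That said, your sketch is broadly in the spirit of the actual arguments in \cite{LN_gcba,LN_top}, but there is one concrete slip in the $k=1$ homology-manifold step. You decompose a small ball $V$ around $y$ into the sublevel and superlevel sets of $d_p$ and assert that ``each half is locally convex, hence contractible''. Only the sublevel set $\{d_p\le t\}\cap V$ is convex; the superlevel set $\{d_p\ge t\}\cap V$ is not, since $d_p$ is convex rather than concave. In the actual proof one does get contractibility of both halves, but for the superlevel side this uses the opposite strainer point $\bar p$ (equivalently, the deformation retraction of \cite[Theorem~9.1]{LN_gcba}) rather than convexity. Once that is fixed your Mayer--Vietoris computation goes through and matches the argument of \cite[Corollary~5.2]{LN_top}.
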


\section{Extended strainer maps}\label{sec_relstr}
\subsection{Definition and basic properties}
Throughout this section, $X$ will denote a locally compact and geodesically complete CAT(0) space.

	Let $(p_1,..., p_k)$ be a $k$-tuple in $X$ and let $q\in X$ be an additional point.
	We say that $(p_1,...,p_k,q)$ is an \emph{extended $(k,\delta)$-strainer} in a subset  $A\subset X\setminus \{p_1,...,p_k,q\}$, if the following holds true for all $x\in A$:

The $k$-tuple $(p_i)$ is a $(k,\delta)$-strainer at $x$  and any continuation $qq'$ of the geodesic $qx$ beyond $x$ is such that,
 for all $1\leq i\leq k$,
\[\angle qxp_i < \frac{\pi}{2} + \delta \; \; \text{and}  \; \; \angle q'xp_i < \frac \pi 2 + \delta \,.\]

By the semi-continuity of angles, the set $U$ of all points  at which  $(p_1,...,p_k,q)$ is an extended $(k,\delta)$-strainer is open in  $X\setminus \{p_1,...,p_k,q\}$ \cite[Section~3.3 and Corollary~7.9]{LN_gcba}.

Let $(p_1,...,p_k,q)$ be an extended $(k,\delta)$-strainer in an open set $U\subset X$.  Then we call the map
\[\hat F= (d_{p_1},...,d_{p_k},d_q) = (F,d_q) : U\to (0,\infty)^{k+1}\]
an \emph{extended $(k,\delta)$-strainer map}.

By definition, an extended $(k,\delta)$-strainer map $\hat F:U\to \R^{k+1}$ is also an extended
	$(k,\delta')$-strainer map for any $0<\delta ' <\delta$.

\subsection{Basic properties}\label{subsec_basic}
Let $(p_1,...,p_k,q)$ be  an extended $(k,\delta)$-strainer at a point $x \in X$ and let  $qq'$ be an extension of the geodesic $qx$.
Since $\angle qxq' =\pi$,   for $1\leq i \leq k$, we have
\[\angle   p_i xq  > \frac \pi 2  -\delta \; \; \text{and}  \; \; \angle p_i x q'   > \frac \pi 2  -\delta \;.\]

 We fix  an opposite $(k,\delta)$-strainer $(p_1',...,p_k')$ to $(p_i)$ at the point $x$.  The definition of opposite strainers implies:
\[\angle   p_i ' xq  < \frac \pi 2  + 2\delta \; \; \text{and}  \; \; \angle p_i 'x q'   < \frac \pi 2  + 2\delta \;.\]
Therefore, we also get

\[\angle   p_i ' xq  > \frac \pi 2  - 2\delta \; \; \text{and}  \; \; \angle p_i 'x q'   > \frac \pi 2  - 2\delta \;.\]

Applying \cite[Lemma~8.1]{LN_gcba} (compare \cite[Lemma~8.2]{LN_gcba})  we get:
\begin{lem}\label{lem_almostiso}
For $\delta \leq \frac 1 {20\cdot k}$ and $L=  2\sqrt {k+1}$,  	any extended $(k,\delta)$-strainer map $\hat F: U\to \R^{k+1}$ is  $L$-Lipschitz and $L$-open.
\end{lem}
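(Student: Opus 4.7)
The Lipschitz bound is the easy part, which I would dispatch first: each coordinate function $d_{p_i}$ and $d_q$ of $\hat F$ is $1$-Lipschitz by the triangle inequality, so $\hat F : U \to \R^{k+1}$ (with the Euclidean norm on the target) is $\sqrt{k+1}$-Lipschitz, hence $L$-Lipschitz with $L = 2\sqrt{k+1}$.

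The substantive part is openness, and the plan is to reduce to \cite[Lemma~8.1]{LN_gcba}, the angle-theoretic lemma which underlies the openness proof for ordinary strainer maps in \cite[Lemma~8.2]{LN_gcba}. Fix $x \in U$ such that $\bar B_{Lr}(x)$ is complete, together with a target $y \in \R^{k+1}$ satisfying $|y - \hat F(x)| < r$; the goal is to produce some $z \in \bar B_{Lr}(x)$ with $\hat F(z) = y$. Let $v_i \in \Si_x X$ denote the starting direction of $xp_i$, let $u$ denote the starting direction of $xq$, fix an opposite $(k,\delta)$-strainer $(\bar v_i)$ to $(v_i)$ at $x$, and let $\hat u$ be the starting direction of an arbitrary continuation of $qx$ beyond $x$. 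Then $d(u, \hat u) = \pi$ exactly, the pairs $(v_i, \bar v_i)$ are $\delta$-spherical by hypothesis, and the calculation carried out in Section~\ref{subsec_basic} establishes that every cross angle
\[
\angle(v_i, u),\ \angle(v_i, \hat u),\ \angle(\bar v_i, u),\ \angle(\bar v_i, \hat u)
\]
lies within $2\delta$ of $\pi / 2$, while $\angle(v_i, \bar v_j)$ and $\angle(v_i, v_j)$ for $i \neq j$ already lie within $\delta$ of $\pi / 2$ by the strainer hypothesis. Hence the augmented $(k+1)$-tuples $(v_1, \ldots, v_k, u)$ and $(\bar v_1, \ldots, \bar v_k, \hat u)$ in $\Si_x X$ satisfy the quantitative angle conditions that \cite[Lemma~8.1]{LN_gcba} requires with angle-parameter $2\delta$, and the hypothesis $\delta \leq \tfrac{1}{20k}$ keeps $2\delta$ safely below the threshold used there. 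The lemma then delivers the desired preimage $z$ with openness constant $L = 2\sqrt{k+1}$.

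The only conceptual obstacle I anticipate is that $u$ and $\hat u$ are \emph{not} assumed to be $\delta$-spherical directions individually; the extended-strainer definition replaces the near-antipodality on the last coordinate by the exact antipodality of the geodesic continuation $\hat u$ of $u$. The essential check is that the openness estimate in \cite[Lemma~8.1]{LN_gcba} really does use the strainer data only through pairwise angle bounds of the form listed above, rather than through the $\delta$-spherical property of each individual coordinate. This is precisely the reason the constant for the last coordinate is doubled to $2\delta$, and it is why the paper bothered to derive the lower angle bounds $\angle p_i' x q > \pi/2 - 2\delta$ and $\angle p_i' x q' > \pi/2 - 2\delta$ in Section~\ref{subsec_basic} before stating the lemma.
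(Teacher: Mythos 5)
Your argument is correct and matches the paper's own proof, which likewise consists of the angle estimates derived in Section~\ref{subsec_basic} (pairwise cross angles within $2\delta$ of $\tfrac{\pi}{2}$ for the tuples $(v_1,\ldots,v_k,u)$ and $(\bar v_1,\ldots,\bar v_k,\hat u)$) followed by a direct application of \cite[Lemma~8.1]{LN_gcba} (compare \cite[Lemma~8.2]{LN_gcba}), the Lipschitz bound being immediate from the $1$-Lipschitz coordinates. Your remark that the cited lemma only uses the pairwise angle bounds, not individual $\delta$-sphericity of the last coordinate, is exactly the point the paper's preparation in Section~\ref{subsec_basic} is designed to address.
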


\begin{rem}
 The
  argument  in  \cite[Lemma~8.3]{LN_gcba}  allows to choose
  the  constant $L$ above  arbitrary close to $1$, if
only $\delta $ is sufficiently small.
\end{rem}

By definition, any point $q$ is an extended $(0,\delta)$-strainer in $X\setminus \{q \}$ for any $\delta >0$ and the distance function $d_q: X\setminus \{q\}  \to (0,\infty)$ is an extended $(0,\delta)$-strainer map.   We are interested in distance spheres, thus fibers of such $(0,\delta)$-strainer maps.
As in \cite{BGP,LN_gcba},  we approach the structure of these fibers by finding more strainers:

\begin{lem}\label{lem_extrelstr}
	Let $\hat F= (d_{p_1},...,d_{p_k}, d_q):U\to\R^{k+1}$ be an extended $(k,\delta)$-strainer map for some $k\geq 0$ and $\delta < \frac 1 {20\cdot k}$.
   Let $p\in U$ be arbitrary and let $\hat \Pi_p:=\hat F^{-1} (\hat F(p))$ be the fiber of $\hat F$ through $p$.

	Then there exists  $r>0$ such that $(p,p_1,...,p_k,q)$ is an extended
	$(k+1, 4\cdot \delta)$-strainer in the intersection of $\hat \Pi_p$ and the punctured ball
	$\dot B_r(p)$.
\end{lem}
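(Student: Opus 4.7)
The plan is to decompose the conclusion into (i) the claim that $(p, p_1, \ldots, p_k)$ forms an ordinary $(k+1, 4\delta)$-strainer on $\hat\Pi_p \cap \dot B_r(p)$, and (ii) the extended angle conditions relating each of these $k+1$ strainer points to $q$ and to every continuation $q'$ of the geodesic $qx$. For (i), I would apply \cite[Proposition~9.4]{LN_gcba} (the second building block cited in Section~\ref{sec: strainer}) to the non-extended strainer map $F := (d_{p_1}, \ldots, d_{p_k})$ at the point $p$. This produces some $r_1 > 0$ such that $(p, p_1, \ldots, p_k)$ is a $(k+1, 4\delta)$-strainer on $\dot B_{r_1}(p) \cap \Pi_p$, where $\Pi_p$ is the fiber of $F$ through $p$. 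Since the additional level-set condition defining $\hat\Pi_p$ only restricts further, $\hat\Pi_p \subset \Pi_p$, and the strainer condition transports to $\hat\Pi_p \cap \dot B_{r_1}(p)$.

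In part (ii), the angle conditions with the old strainer points $p_i$ ($i \geq 1$) are inherited (with $\delta$, hence also with $4\delta$) from the hypothesis that $(p_1, \ldots, p_k, q)$ is an extended $(k, \delta)$-strainer. It remains to verify, at every $x \in \hat\Pi_p \cap \dot B_r(p)$, the two new inequalities $\angle qxp < \pi/2 + 4\delta$ and $\angle q'xp < \pi/2 + 4\delta$, the latter for every continuation $q'$. Set $a := d(q,p) > 0$ and $s := d(x,p)$. Since $x \in \hat\Pi_p$ forces $d(q,x) = d(q,p) = a$, the triangle $qxp$ is isoceles, and CAT(0) comparison at the vertex $x$ directly gives $\angle qxp \leq \tilde\angle qxp = \arccos(s/(2a)) < \pi/2$.

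The hard part is the inequality involving $q'$: in a CAT(0) space the Euclidean identity $\angle qxp + \angle q'xp = \pi$ degrades to the one-sided bound $\geq \pi$, so the estimate on $\angle qxp$ alone does not control $\angle q'xp$ from above. My plan is to pivot the analysis to the vertex $q$. Since $q$, $x$, $q'$ lie on a single geodesic, the directions at $q$ toward $x$ and toward $q'$ coincide, so $\angle pqq' = \angle pqx \leq \tilde\angle pqx = 2\arcsin(s/(2a))$. Writing $t := d(x,q')$ and applying the CAT(0) hinge inequality at $q$ to the hinge $(qq', qp)$ yields
\[d(q', p)^2 \leq (a+t)^2 + a^2 - 2a(a+t)\cos\bigl(2\arcsin(s/(2a))\bigr) = t^2 + s^2 + \frac{ts^2}{a}.\]
Feeding this estimate back into the Euclidean comparison triangle at $x$ produces
\[\cos\tilde\angle q'xp \;=\; \frac{t^2 + s^2 - d(q', p)^2}{2ts} \;\geq\; -\frac{s}{2a},\]
crucially independent of $t$. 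Hence $\angle q'xp \leq \tilde\angle q'xp \leq \pi/2 + \arcsin(s/(2a))$, and choosing $r \in (0, r_1]$ with $r < 2a\sin(4\delta)$ enforces both new angle conditions uniformly on $\hat\Pi_p \cap \dot B_r(p)$ and over every continuation $q'$.
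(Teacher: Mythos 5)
Your step (i) and the isosceles estimate $\angle qxp \le \tilde\angle qxp < \pi/2$ coincide with the paper's argument, but the crucial step --- the upper bound on $\angle q'xp$ --- rests on a comparison inequality applied in the wrong direction. In a CAT(0) space the Alexandrov angle is bounded \emph{above} by the comparison angle, so from $\angle pqq' \le 2\arcsin\bigl(s/(2a)\bigr)$ one only obtains the \emph{lower} bound $d(q',p)^2 \ge (a+t)^2 + a^2 - 2a(a+t)\cos\angle pqq'$; the upper bound you write down is a Toponogov-type hinge estimate, valid under a lower curvature bound, not under an upper one. A tripod shows the failure concretely: take a branch point $m$, put $q$ at distance $a-s/2$ from $m$ on one leg, put $p$ and $x$ at distance $s/2$ from $m$ on the two other legs, and let $q'$ lie at distance $t$ beyond $x$. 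Then $d(q,p)=d(q,x)=a$, $d(x,p)=s$, $\angle pqq'=0$, yet $d(q',p)=s+t$, violating your displayed inequality; correspondingly $\angle q'xp=\pi$ there, violating the conclusion $\angle q'xp\le \pi/2+\arcsin\bigl(s/(2a)\bigr)$. Branching of geodesics is exactly the enemy, and it cannot be ruled out by distance comparison alone.

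This also explains why no explicit radius such as $r<2a\sin(4\delta)$, depending only on $a$ and $\delta$, can be expected: such branching configurations on the sphere $S_a(q)$ can occur at small but positive distance from $p$ even in locally compact, geodesically complete CAT(0) spaces, and the lemma only asserts that they cannot occur \emph{arbitrarily close} to $p$; hence $r$ must depend on the local geometry of $X$ near $p$. The paper handles precisely this point non-constructively: assuming points $x_i\in\hat\Pi_p\setminus\{p\}$ converge to $p$ with continuations $q_i'$ satisfying $\angle p x_i q_i'\ge \pi/2+\delta$, it normalizes $d(x_i,q_i')=1$, uses geodesic completeness to extend $x_ip$ beyond $p$ to points $p_i$ with $\angle p_i p q_\infty'\le \pi/2$, and then invokes semicontinuity of angles to force $\angle p_\infty p q_\infty'\ge \pi/2+\delta$, a contradiction. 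Your treatment of the remaining items (Proposition~9.4 of \cite{LN_gcba} for the $(k+1,4\delta)$-strainer on the fiber, inheritance of the angle conditions involving $p_1,\dots,p_k$, and the bound on $\angle qxp$) is fine and agrees with the paper, but the argument for the continuation $q'$ must be replaced, for instance by the paper's limiting argument.
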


\begin{proof}
 We apply \cite[Proposition~9.4]{LN_gcba} and find some $r>0$ such that
 $(p,p_1,...,p_k)$ is a $(k+1, 4\delta)$-strainer in   $\hat \Pi_p \cap \dot B_r(p)$.

 For any $x\in \hat \Pi_p \cap \dot B_r(p)$, the points $x$ and $p$ are
 at equal distance to $q$,
  hence  $\angle p xq <\frac \pi 2$.
It remains to prove that, for  sufficiently small $r>0$,   $\angle p xq'<\frac \pi 2+ \delta$, for all $x\in \hat \Pi_p \cap \dot B_r(p)$
and all points $q'$ with $\angle qxq'=\pi$. Suppose for contradiction that we find
$x_i \in \hat \Pi_p \setminus \{p\}$ converging to $p$ and points $q_i'$ lying on extensions of the geodesics $qx_i$ such that
$\angle p x_iq_i'\geq\frac \pi 2+ \delta$. We may assume that $d(x_i,q_i')=1$ and that $q_i'$ converges to a point $q_\infty'$.
Using geodesic  completeness,  we extend $x_i p$ to a geodesic $x_i p_i$ of length one and such that
 \[\angle p_i p q_\infty'= \pi - \angle q_\infty' p x_i \leq \frac \pi 2\;.\]

\includegraphics[scale=0.6,trim={-3cm 2cm 0cm 4cm},clip]{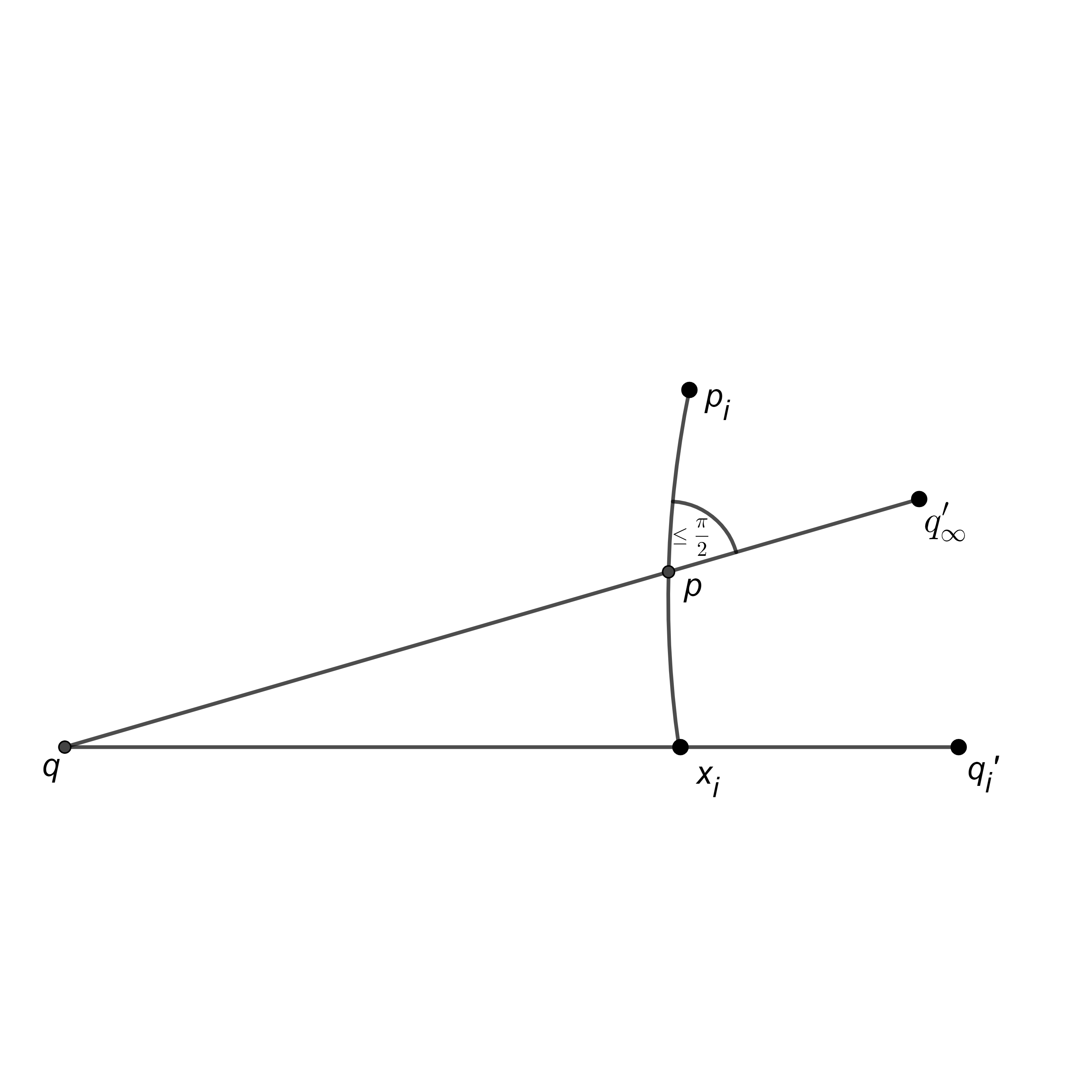}

Taking a subsequence, we may assume that $p_i$ converges to a point $p_{\infty}$. Hence, $\angle p_\infty p q_\infty'\leq\frac \pi 2$.
But by semi-continuity of angles
\[\angle p_\infty p q_\infty' \geq \limsup\limits_{i\to\infty} \angle p_i x_iq_i'\geq\frac \pi 2+ \delta\;.\]

This contradiction finishes the proof.
\end{proof}

\subsection{Halfspaces}
Let $\hat F=(F,d_o)$ be an extended strainer map on an open set $U$. We denote the $F$-fiber and $\hat F$-fiber through a
point $x$ by $\Pi_x$ and $\hat\Pi _x$, respectively. We define the {\em $\hat F$-halfspace} through $x$ by
\[\hat\Pi_x^{+}=\{y\in \Pi_x|\ d_o(y)\leq d_o(x)\}.\]

 The proof of our main results will rely on the following structural results about the  fibers and halfspaces of extended strainer maps.   The proofs of these results
 are postponed to Section~\ref{sec: last}.

 The first result generalizes   \cite[Proposition~2.7]{T_tame} and \cite[Corollary~5.2]{LN_top}:

\begin{prop}\label{prop_fiberhom}
		Let $U$ be an open subset of $X$. Then for
		any extended $(k,\delta)$-strainer map $\hat F: U\to \R^{k+1}$ with $\delta<\frac{1}{64\cdot k}$  the following holds true for any $x\in U$.
	\begin{enumerate}
		\item The halfspace $\hat \Pi _x ^{+}$ is an ANR.
		\item If $U$ is a homology $n$-manifold, then $\hat \Pi _x ^{+}$ is a homology $(n-k)$-manifold with boundary $\hat \Pi _x$. The fiber
		$\hat \Pi_x$ is a homology $(n-k-1)$-manifold without boundary.
	\end{enumerate}
\end{prop}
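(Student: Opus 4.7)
The plan is to verify both assertions pointwise at each $y\in\hat\Pi_x^+$ by splitting into cases based on whether $d_q(y)<d_q(x)$ or $y\in\hat\Pi_x$. Since the ANR condition is equivalent to local contractibility for finite-dimensional spaces, it suffices in each case to produce well-behaved neighborhoods of $y$ and compute the local homology there.

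For an interior point $y$, continuity of $d_q$ implies that a sufficiently small open neighborhood of $y$ in $\hat\Pi_x^+$ coincides with a neighborhood of $y$ in the $F$-fiber $\Pi_x=\Pi_y$. Since $F$ is a $(k,\delta)$-strainer map with $\delta<\frac{1}{20k}$, Theorem~\ref{thm: fibration} provides arbitrarily small open contractible neighborhoods $V$ of $y$ on which $F$ is a Hurewicz fibration with contractible fibers, and when $U$ is a homology $n$-manifold these fibers are homology $(n-k)$-manifolds. This immediately yields the required local ANR and homology manifold structure of $\hat\Pi_x^+$ at $y$, with $y$ interior in the top-dimensional sense.

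In the boundary case $y\in\hat\Pi_x$, I would apply Lemma~\ref{lem_extrelstr} with $p=y$, securing $r>0$ so that $(y,p_1,\ldots,p_k,q)$ is an extended $(k+1,4\delta)$-strainer on $\hat\Pi_y\cap\dot B_r(y)$. Openness of the strainer condition then supplies an open neighborhood $W$ of $y$ on which the non-extended part $G:=(d_y,F):W\to\R^{k+1}$ is a $(k+1,4\delta)$-strainer map, and Theorem~\ref{thm: fibration} applied to $G$ furnishes a small open contractible $V\subset W$ where $G:V\to G(V)$ is a Hurewicz fibration with contractible fibers -- homology $(n-k-1)$-manifolds under the homology manifold hypothesis. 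Within $\Pi_x\cap V$, the map $G$ reduces to $d_y$, foliating $\Pi_x\cap V$ by the concentric slices $\{d_y=s,\,F=F(x)\}$ for $s\geq 0$. The extended strainer condition at $y$ -- $q$ is essentially antipodal to $y$ and nearly orthogonal to every $p_i$ -- forces the direction to $q$ to be transverse to these slices, so $d_q$ restricted to $\Pi_x\cap V$ behaves like a coordinate projection onto an open interval with level sets the $\hat F$-fibers $\hat\Pi_z$ near $y$. Consequently $\hat\Pi_x^+\cap V$ is identified with the preimage of a closed half-interval -- a manifold-with-boundary neighborhood of $\hat\Pi_x$ inside $\Pi_x$ -- from which the ANR property of $\hat\Pi_x^+$ at $y$ and the local homology computations placing $y$ on the boundary in the sense of Section~\ref{subsec: homman} will follow.

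The principal technical hurdle is making this last step rigorous: turning the extended strainer structure at $y$ into a genuine Hurewicz fibration structure for $d_q$ restricted to $\Pi_x$ near $y$. The natural recognition tool, Theorem~\ref{thm: hurewicz}, demands uniform local contractibility of the $\hat F$-fibers (openness of $\hat F$ is already delivered by Lemma~\ref{lem_almostiso}), yet uniform local contractibility of $\hat F$-fibers is precisely the assertion the proposition is designed to establish. Circumventing this circularity appears to require either an induction on the codimension $n-k$, or an iterative augmentation of the strainer count via repeated applications of Lemma~\ref{lem_extrelstr}, ultimately descending to a bilipschitz chart provided by \cite[Corollary~11.2]{LN_gcba} once enough strainers have been stacked. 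The explicit constant $\delta<\frac{1}{64k}$ in the hypothesis presumably reflects the factor $4$ by which $\delta$ grows at each application of Lemma~\ref{lem_extrelstr}, engineered to remain within the bound required by Theorem~\ref{thm: fibration} throughout the iteration.
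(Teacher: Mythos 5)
There is a genuine gap, and you essentially flag it yourself. Your treatment of interior points $y\in\hat\Pi_x^+\setminus\hat\Pi_x$ is fine (a small neighborhood there is open in the $F$-fiber, so Theorem~\ref{thm: fibration} gives both local contractibility and the homology $(n-k)$-manifold structure; the paper does the same). But the whole content of the proposition is the boundary case $y\in\hat\Pi_x$, and there you only assert that the extended strainer condition makes $d_q$ restricted to $\Pi_x$ near $y$ ``behave like a coordinate projection,'' i.e.\ that the halfspace has a collar-like product structure near its boundary. That is stronger than anything available and is never justified; you then correctly observe that the obvious recognition tool, Theorem~\ref{thm: hurewicz}, is circular here, and the escape routes you sketch do not close the gap. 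An induction on codimension is not set up, and stacking strainers via Lemma~\ref{lem_extrelstr} down to a bilipschitz chart from \cite[Corollary~11.2]{LN_gcba} requires the strainer number to equal the dimension of the ambient space, which says nothing about the local structure of the halfspace $\hat\Pi_x^+$ at its boundary points (and is unavailable anyway since the fibers have positive dimension in the intended application).

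The missing idea in the paper is Proposition~\ref{prop_defohs}: for $r$ below the extended straining radius, the ball $B_r(y)$ deformation retracts onto $\hat\Pi_x^+\cap B_r(y)$. This is built by concatenating the retraction of the ball onto the $F$-fiber from \cite[Theorem~9.1]{LN_gcba} with the geodesic contraction toward $o$, and iterating; the first variation estimate gives $M(\Psi(z,1))\le 2\delta\cdot M(z)$, so the iteration converges (this contraction estimate, $4\delta\cdot 8k\le\frac12$, is also the actual source of the constant $\delta<\frac{1}{64k}$ -- not the factor-$4$ loss in Lemma~\ref{lem_extrelstr} that you guess). From this retraction one gets that $\hat\Pi_x^+$ is an ANR and, via Corollary~\ref{cor_Z}, that $\hat\Pi_x$ is homotopy negligible in $\hat\Pi_x^+$; hence the local homology $H_\ast(\hat\Pi_x^+,\hat\Pi_x^+\setminus\{y\})$ vanishes at every $y\in\hat\Pi_x$, and Mitchell's theorem \cite{Mitch} then identifies $\hat\Pi_x^+$ as a homology $(n-k)$-manifold with boundary $\hat\Pi_x$, which is itself a homology $(n-k-1)$-manifold without boundary. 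Note that this route never produces (and never needs) a fibration or product structure for $d_q$ on $\Pi_x$ near the boundary -- only a retraction and a local homology computation -- which is precisely how the circularity you identified is avoided. Without an argument of this kind, your proof of both the ANR claim at boundary points and the boundary identification in part (2) is incomplete.
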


The second statement is an extension of Theorem~\ref{thm_sph} which constitutes the special case where $k=0$ and $y=x$. In the proof of our main results we will
only need the cases $k=0$ and $k=1$.

\begin{prop}\label{prop_contrhemi}
For every relatively compact set $V\subset X$ there exists $\delta_0 >0$ with the following property.
Let $\hat F: X\to \R^{k+1}$ be a distance map which is an extended $(k,\delta_0)$-strainer map at a point $x\in V$.
Then there exist $\epsilon_0, s_0 >0$ such that for any $0<\epsilon < \epsilon _0$  and any point $y$ with $d(x,y) <s_0\cdot \epsilon$
	 the `hemisphere' $S_{\epsilon} (x) \cap \hat \Pi^+_y$ is contractible and locally contractible.
\end{prop}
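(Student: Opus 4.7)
My plan is to realize the hemisphere $S_\epsilon(x)\cap\hat\Pi_y^+$ as the halfspace of an augmented extended strainer map (obtained from $\hat F$ via Lemma~\ref{lem_extrelstr}), derive local contractibility from Proposition~\ref{prop_fiberhom}, and establish global contractibility by rescaling at $x$ and comparing to a contractible model in the tangent cone $T_xX$.

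To set this up, choose $\delta_0<\frac{1}{256(k+1)}$ and apply Lemma~\ref{lem_extrelstr} to the extended $(k,\delta_0)$-strainer $(p_1,\dots,p_k,o)$ at $x$: for some $r>0$, the tuple $(x,p_1,\dots,p_k,o)$ is an extended $(k+1,4\delta_0)$-strainer on an open neighborhood $W$ of $\hat\Pi_x\cap\dot B_r(x)$, and the associated extended strainer map $G=(d_x,F,d_o):W\to\R^{k+2}$ is $L$-Lipschitz and $L$-open by Lemma~\ref{lem_almostiso}. Using $L$-openness of $G$, choose $\epsilon_0,s_0>0$ so that $(\epsilon,F(y),d_o(y))\in G(W)$ whenever $\epsilon<\epsilon_0$ and $d(x,y)<s_0\epsilon$; for any preimage $z\in W$ the halfspace $\hat\Pi_z^+$ of $G$ coincides with $S_\epsilon(x)\cap\hat\Pi_y^+$ by definition. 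Proposition~\ref{prop_fiberhom}(1) applied to $G$ then yields that this halfspace is an ANR, equivalently, being finite-dimensional, locally contractible.

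For global contractibility I rescale $X$ at $x$ by the factor $\epsilon^{-1}$ and pass to the pointed Gromov--Hausdorff limit as $\epsilon\to 0$, which is $(T_xX,o_x)$. Under this rescaling the points $p_i$ and $o$ recede to infinity and the functions $d_{p_i},d_o$ converge to the corresponding Busemann functions on $T_xX$, the rescaled $y$ converges along subsequences to some $y'\in\bar B_{s_0}(o_x)$, and the rescaled hemispheres converge in the Gromov--Hausdorff sense to a limit set $L\subset\Sigma_xX$ cut out by an approximate equator (determined by the $\delta_0$-spherical directions $\hat v_i$ to the $p_i$) intersected with a closed hemisphere around the direction $\hat v_o$ of $xo$. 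The near-orthogonality of $\hat v_o$ with each $\hat v_i$ from the extended strainer condition, together with the $\delta_0$-spherical property of $(\hat v_i)$, forces $L$ to be the link in $\Sigma_xX$ of a non-degenerate convex cone in $T_xX$ contained in a closed half-space around $\hat v_o$; such a link is contractible by a geodesic retraction in the CAT(1) space $\Sigma_xX$. Applying Theorem~\ref{thm: petersen} to the uniformly locally contractible family of rescaled hemispheres then transfers contractibility from $L$ to the pre-limit hemispheres for all sufficiently small $\epsilon$.

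The main technical obstacle is verifying the uniform local contractibility of the rescaled family required by Theorem~\ref{thm: petersen}. Proposition~\ref{prop_fiberhom} supplies the ANR property of each individual halfspace, but the corresponding contractibility function must be controlled independently of the rescaling parameter $\epsilon$ and of the offset $y$. I expect this to follow from the scale-invariance of the extended strainer condition, combined with a careful bookkeeping of constants through the proofs of Proposition~\ref{prop_fiberhom} and Theorem~\ref{thm: fibration} applied to the $(k+1,4\delta_0)$-strainer subsystem $(d_x,F)$ of $G$.
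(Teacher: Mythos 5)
Your skeleton (blow up at $x$, compare the rescaled hemispheres with a limit in the tangent cone via Theorem~\ref{thm: petersen}) is the same as the paper's, but the two steps you leave unfinished are exactly the two places where the actual work happens, and the first of them I do not believe can be carried out in the form you propose. You fix an explicit $\delta_0$ at the outset and blow up at the single point $x$; the limit is then $(T_xX,o_x)$ with Busemann functions whose negative gradients are only a $\delta_0$-spherical tuple, almost but not exactly orthonormal. In that situation there is no splitting $T_xX\cong\R^k\times X'$, the fiber $\{b_1=c_1,\dots,b_k=c_k\}$ is an intersection of convex sublevel sets with \emph{non-convex} superlevel sets, and your claim that the limit set $L$ is ``the link of a non-degenerate convex cone contained in a half-space'' is unsubstantiated — this is precisely the crux of the proposition reappearing at the infinitesimal level, not a consequence of near-orthogonality. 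The paper avoids this by proving the statement by contradiction: assuming failure for every $\delta_0$ produces sequences $\delta_l\to 0$, $x_l\in V$, $\eps_l\to 0$, $y_l$, and only because $\delta_l\to 0$ does the blow-up limit split exactly as $\R^k\times X_\infty'$ (via \cite[Proposition~6.6]{LN_gcba}), so that the limit hemisphere is identified with the closed ball $\bar B_{\pi/2}(v)$ in the CAT(1) space $\Sigma_{x_\infty}X_\infty'$, which is contractible. This is also why the statement can only assert the existence of some $\delta_0=\delta_0(V)$ rather than an explicit one; your direct argument with $\delta_0<\frac{1}{256(k+1)}$ would have to prove a contractibility statement in the cone that the paper never needs and that is not known.

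The second gap is the uniform local contractibility of the rescaled family, which you flag as ``bookkeeping'' through Proposition~\ref{prop_fiberhom} and Theorem~\ref{thm: fibration}. Those results only give that each halfspace is an ANR; they produce no contractibility function uniform in $\epsilon$ and $y$. The paper proves this as a separate Sublemma: it bounds the (extended) straining radius from below at a definite scale on the \emph{limit}, transfers this lower bound back to the approximating spaces (so the radius scales linearly with $\eps_l$), and then contracts small balls in the hemisphere using the deformation retractions of \cite[Theorem~9.1]{LN_gcba} and Proposition~\ref{prop_defohs}. Without an argument of this kind Theorem~\ref{thm: petersen} cannot be applied. (A further, fixable, omission: you must also ensure that the whole hemisphere $S_\epsilon(x)\cap\hat\Pi_y^+$, not just the fiber through $x$, lies in the open set where the augmented map from Lemma~\ref{lem_extrelstr} is an extended strainer — this is the paper's condition ``fiber lies in extended domain'', arranged by shrinking $s_0$.)
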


\section{Proof of the Main Theorem}
\subsection{Topology of intersecting spheres}
We begin the proof of  Theorem~\ref{thm_main}.
Thus, we fix a CAT(0) homology $4$-manifold $X$,  a point $o \in X$ and some radius $R>0$.
We denote by $S$ the distance sphere $S=S_R(o)$ and we are going to verify that $S$ is a topological $3$-manifold.

We fix an arbitrary $p \in S$ for the rest of the proof.  We need to find
 a neighborhood of $p$ in $S$ which is homeomorphic to $\R^3$.
For this
we aim  to show that  the restriction of $d_p$ to $S$ is a fiber bundle on a punctured
neighborhood of $p$ in $S$. The proof boils down to understanding how
distance spheres intersect in  our  CAT(0) homology 4-manifold $X$.

We apply Proposition~\ref{prop_contrhemi} to the relatively
 compact set   $V:= B_{2R}(o)$.
  Note that
$\hat F:=d_o :V\to \R$ is an extended $(0,\delta)$-strainer map for any $\delta >0$. The halfspace
$\hat \Pi _p ^+$ is exactly the ball $\bar B_R(o)$ and $\hat \Pi _p = S$.

\begin{cor}\label{cor_S2}
	There exists a radius $r_p>0$ such that  $S_r(p)\cap S$ is homeomorphic to $\Sph^2$
	for every $0<r\leq r_p$.
\end{cor}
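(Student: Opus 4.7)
The plan is to realize $S_r(p) \cap S$ as the fiber through a point $y$ of an extended $(1, 4\delta)$-strainer map $\hat G := (d_p, d_o)$ (with $S_r(p) \cap \bar B_R(o)$ as the associated halfspace), use Proposition~\ref{prop_fiberhom} and Moore's theorem to show this fiber is a closed topological surface, and then identify it as $\mathbb{S}^2$ via contractibility of the halfspace (Theorem~\ref{thm_sph}) combined with Poincar\'e--Lefschetz duality.

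First I would produce the extended strainer. Since $\hat F = d_o$ is an extended $(0, \delta)$-strainer map on $V$ for every $\delta > 0$, I fix $\delta < \tfrac{1}{256}$ and apply Lemma~\ref{lem_extrelstr} at the fiber point $p$ to obtain a radius $r_p > 0$ such that $(p, o)$ is an extended $(1, 4\delta)$-strainer on $S \cap \dot B_{r_p}(p)$. Openness of the extended strainer condition then provides, after possibly shrinking $r_p$, an open neighborhood $W \subset X$ of $\{z \in S : 0 < d(z, p) \leq r_p\}$ on which $\hat G : W \to \mathbb{R}^2$ is an extended $(1, 4\delta)$-strainer map.

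Next I would analyze the fibers of $\hat G$. For every $r \in (0, r_p]$, any $y \in S_r(p) \cap S$ lies in $W$, and the $\hat G$-fiber and halfspace through $y$ are
\[
\hat \Pi_y = S_r(p) \cap S, \qquad \hat \Pi_y^{+} = S_r(p) \cap \bar B_R(o);
\]
both are compact as closed subsets of the compact ball $\bar B_R(o)$. Because $4\delta < \tfrac{1}{64}$, Proposition~\ref{prop_fiberhom} applied to the open homology $4$-manifold $W$ would identify $\hat \Pi_y^{+}$ as a compact homology $3$-manifold with boundary equal to the homology $2$-manifold $\hat \Pi_y$; Moore's theorem (Theorem~\ref{thm: bing}) then upgrades $\hat \Pi_y$ to a closed topological surface. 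Shrinking $r_p$ once more to lie below the constant provided by Theorem~\ref{thm_sph} applied at $p$, the halfspace $\hat \Pi_y^{+}$ is also contractible; in particular it is non-empty, and so is $\hat \Pi_y$, for otherwise $\hat \Pi_y^{+}$ would be a closed contractible homology $3$-manifold, contradicting Poincar\'e duality.

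To finish, I would invoke Poincar\'e--Lefschetz duality: since $\hat \Pi_y^{+}$ is a compact contractible (hence orientable) homology $3$-manifold with boundary $\hat \Pi_y$, the isomorphism $H_i(\hat \Pi_y^{+}, \hat \Pi_y; \mathbb{Z}) \cong H^{3-i}(\hat \Pi_y^{+}; \mathbb{Z})$ inserted into the long exact sequence of the pair forces $H_{\ast}(\hat \Pi_y; \mathbb{Z}) \cong H_{\ast}(\mathbb{S}^2; \mathbb{Z})$. Since $\hat \Pi_y$ is already known to be a closed topological surface, the classification of surfaces yields $\hat \Pi_y = S_r(p) \cap S \cong \mathbb{S}^2$. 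The most delicate step will be the first one---ensuring that $(p,o)$ serves as an extended strainer on an open subset of $X$ containing every relevant fiber $S_r(p)\cap S$ simultaneously; thereafter, Proposition~\ref{prop_fiberhom}, Theorem~\ref{thm_sph}, and the classification of surfaces combine cleanly to conclude.
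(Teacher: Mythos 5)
Your overall route is the same as the paper's (realize $S_r(p)\cap S$ as the boundary of the hemisphere $S_r(p)\cap \bar B_R(o)$ via the extended strainer $(p,o)$, apply Proposition~\ref{prop_fiberhom} and Moore's theorem, then contractibility plus duality and the classification of surfaces), but there is a genuine gap in your first, "delicate" step. You obtain the extended strainer condition for $(p,o)$ only along $S\cap \dot B_{r_p}(p)$ and then pass to an open neighborhood $W$ of this set; but you then assert that the $\hat G$-halfspace through $y$, computed in $W$, equals $S_r(p)\cap \bar B_R(o)$. This is false: the halfspace relative to $W$ is $S_r(p)\cap \bar B_R(o)\cap W$, and no open set on which $(p,o)$ is an extended $(1,\delta)$-strainer can contain the whole hemisphere, because the extended condition fails at interior points of $B_R(o)$ on $S_r(p)$ --- for instance at the point $x$ on the geodesic $op$ with $d(x,p)=r$, where $\angle oxp=\pi$. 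With only the collar piece $S_r(p)\cap \bar B_R(o)\cap W$, which is neither compact nor covered by Theorem~\ref{thm_sph}, your Poincar\'e--Lefschetz duality step has no object to apply to, so the argument as written does not close.

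The missing ingredient is exactly the extra clause in the paper's choice of $r_p$: besides the extended strainer property of $(p,o)$ along $S$, one also needs $p$ alone to be a $(1,\delta)$-strainer on the \emph{whole} punctured ball $\dot B_{r_p}(p)$ (this holds for small $r_p$ by \cite[Propositions~7.3 and 9.4]{LN_gcba}). The plain strainer map $d_p$ then controls the interior of the hemisphere: by Theorem~\ref{thm: fibration} the sphere $S_r(p)$ is a homology $3$-manifold there, while the extended strainer along $S$ governs the boundary points as in the proof of Proposition~\ref{prop_fiberhom} (strainer radius at interior points, extended strainer radius at fiber points). Combined with Proposition~\ref{prop_contrhemi} (your Theorem~\ref{thm_sph}), this yields that the full compact hemisphere $S_r(p)\cap \bar B_R(o)$ is a contractible homology $3$-manifold with boundary $S_r(p)\cap S$; from that point on, your use of Theorem~\ref{thm: bing}, duality, and the classification of surfaces coincides with the paper's conclusion.
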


\begin{proof}
By \cite[Proposition~9.4]{LN_gcba} and Lemma~\ref{lem_extrelstr}, we can	choose $r_p$  such that $p$ is a $(1,\delta)$-strainer in $\dot B_{r_0} (p)$ and
	$(p,o)$ is an extended $(1,\delta)$-strainer on $\dot B_{r_0} (p) \cap S$.
In addition, we choose $r_p$ smaller than the constant $\eps_0=\eps_0(p)$ from Proposition~\ref{prop_contrhemi}.
 Then
by  Proposition~\ref{prop_fiberhom} and Proposition~\ref{prop_contrhemi}, $S_r(p)\cap \bar B_R(o)$ is a contractible  homology  $3$-manifold with boundary
$S_r(p)\cap S$, for all $r<r_p$.
Thus,  $S_r(p)\cap S$ is a homology $2$-manifold and therefore a $2$-manifold by Theorem~\ref{thm: bing}.
By Poincar\'e duality,  $S_r(p)\cap S$ is a homology $2$-sphere, see \cite[Proposition 2.8]{T_tame}. Due to the classification of surfaces, $S_r(p)\cap S$ is homeomorphic to $\mathbb S^2$.
\end{proof}

\begin{lem} \label{lem: tech}
 There exists  $r_0=r_0(p)>0$  such that the distance function $d_p:\dot B_{r_0} (p)\cap S \to (0,r_0)$
	has uniformly locally contractible fibers.
\end{lem}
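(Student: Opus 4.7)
The plan is to combine Proposition~\ref{prop_contrhemi} (contractible hemispheres) with Proposition~\ref{prop_fiberhom} (homology-manifold structure of extended-strainer halfspaces) in order to show that, for any $x_0\in \dot B_{r_0}(p)\cap S$ and all sufficiently small $\epsilon>0$, the intersection $B_\epsilon(x_0)\cap S_r(p)\cap S$ is a topological open $2$-disk for every $r$ arising from a fiber that meets $B_\epsilon(x_0)$. Since open disks are contractible, this immediately yields uniform local contractibility of the fibers of $d_p$ restricted to $\dot B_{r_0}(p)\cap S$.

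Fix $x_0\in \dot B_{r_0}(p)\cap S$ and keep the extended $(1,\delta)$-strainer map $\hat F=(d_p,d_o)$ on $\dot B_{r_0}(p)\cap S$ from the setup of Corollary~\ref{cor_S2}. Proposition~\ref{prop_contrhemi} applied at $x_0$ produces $\epsilon_0, s_0>0$ such that the hemisphere $H_{\epsilon,y}:=S_\epsilon(x_0)\cap \hat \Pi_y^{+}$ is contractible and locally contractible whenever $0<\epsilon<\epsilon_0$ and $d(x_0,y)<s_0\epsilon$. In parallel, Lemma~\ref{lem_extrelstr} applied to $\hat F$ with the new strainer $x_0$ produces some $r>0$ for which $(x_0,p,o)$ is an extended $(2,4\delta)$-strainer on $\hat \Pi_{x_0}\cap \dot B_r(x_0)$; by the openness statement in Section~\ref{subsec_basic}, the condition extends to some open neighborhood $W\subset X$. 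After further shrinking $\epsilon_0$ via a compactness/continuity argument, I can arrange that every hemisphere $H_{\epsilon,y}$ with admissible $(\epsilon,y)$ lies entirely in $W$.

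On $W$, the map $\hat G=(d_{x_0},d_p,d_o)$ is an extended $(2,4\delta)$-strainer map, so Proposition~\ref{prop_fiberhom} identifies $H_{\epsilon,y}$ as a compact homology $2$-manifold with boundary $S_\epsilon(x_0)\cap \hat \Pi_y$, a homology $1$-manifold. Moore's theorem (Theorem~\ref{thm: bing}) upgrades both to topological manifolds, and since $H_{\epsilon,y}$ is compact and contractible by Proposition~\ref{prop_contrhemi}, the classification of compact contractible topological surfaces with boundary forces $H_{\epsilon,y}\cong D^2$ and $S_\epsilon(x_0)\cap \hat \Pi_y\cong \mathbb S^1$. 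By Corollary~\ref{cor_S2}, $\hat \Pi_y\cong \mathbb S^2$, so Jordan's theorem says the embedded circle $S_\epsilon(x_0)\cap \hat \Pi_y$ partitions $\hat \Pi_y$ into exactly two open connected components. The two non-empty disjoint open sets $B_\epsilon(x_0)\cap \hat \Pi_y$ (containing $y$ as long as $d(x_0,y)<\epsilon$) and $\hat \Pi_y\setminus \bar B_\epsilon(x_0)$ (non-empty for $\epsilon$ small compared to the diameter of $\hat \Pi_y$) together cover $\hat \Pi_y$ minus this circle, so each must coincide with a single Jordan component and is therefore an open topological disk. In particular, $B_\epsilon(x_0)\cap \hat \Pi_y$ is contractible; setting $U:=B_\epsilon(x_0)\cap \dot B_{r_0}(p)\cap S$ and any strictly smaller metric-ball neighborhood $V\subset U$ of $x_0$, the intersection $V\cap \hat \Pi_y$ is null-homotopic inside the contractible set $U\cap \hat \Pi_y$, verifying Definition~\ref{defn: loc}.

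The main technical obstacle is the compactness/continuity step ensuring $H_{\epsilon,y}\subset W$. Lemma~\ref{lem_extrelstr} only asserts the extended strainer condition along the reference fiber $\hat \Pi_{x_0}$ itself, so the openness statement from Section~\ref{subsec_basic} must be combined with a uniform tube estimate around a suitable compact piece of $\hat \Pi_{x_0}$, together with the basic bound $d(z,\hat \Pi_{x_0})\leq d(z,x_0)=\epsilon$ for $z\in H_{\epsilon,y}$, to conclude that the entire hemisphere sits inside the tube once $\epsilon$ and $d(x_0,y)$ are small enough. Once this is in hand, the remaining pieces slot together cleanly from the classification of surfaces and Jordan's theorem.
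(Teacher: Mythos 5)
Your overall architecture (contractible hemispheres via Proposition~\ref{prop_contrhemi}, homology surfaces with boundary via Proposition~\ref{prop_fiberhom}, Moore's theorem, then Jordan's theorem on the $\Sph^2$-fibers from Corollary~\ref{cor_S2}) is the same as the paper's, but your key technical step is not just unproven — it is false. You want to place the \emph{entire} hemisphere $H_{\epsilon,y}=S_\epsilon(x_0)\cap \hat\Pi_y^{+}$ inside an open set $W$ on which the triple $(x_0,p,o)$ is an \emph{extended} $(2,4\delta)$-strainer, so that Proposition~\ref{prop_fiberhom} applies to $\hat G=(d_{x_0},d_p,d_o)$ over all of $H_{\epsilon,y}$. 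But the extended condition involving $o$ cannot hold at interior points of the hemisphere that lie ``below'' $x_0$: take $w$ on (or near) the geodesic $x_0o$ at distance $\epsilon$ from $x_0$; then $\angle x_0\, w\, o=\pi$, violating $\angle o\,w\,x_0<\frac{\pi}{2}+\delta'$ for any admissible $\delta'$, and such points do lie in admissible hemispheres ($d_o(w)=R-\epsilon\leq R$, $|d_p(w)-t_0|=O(\delta\epsilon+\epsilon^2)$, so the corresponding level is hit by some $y$ with $d(x_0,y)<s_0\epsilon$ once $\epsilon$ is small). This is already visible in $\R^4$: the ``south pole'' of the half-slice has antipodal directions to $x_0$ and to $o$. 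Consequently no shrinking of $\epsilon_0$, $s_0$ can arrange $H_{\epsilon,y}\subset W$; your proposed fix (fixed-width tube around a compact piece of $\hat\Pi_{x_0}$ plus $d(z,\hat\Pi_{x_0})\leq\epsilon$) also cannot work, because the relevant part of the fiber approaches $x_0$ as $\epsilon\to 0$ while any neighborhood $W$ of $\hat\Pi_{x_0}\cap\dot B_r(x_0)$ necessarily pinches at $x_0$ (which it excludes), at exactly the scale your hemispheres occupy.

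The repair is the route the paper takes: split the hypotheses according to where they are actually needed. Use \cite[Proposition~9.4]{LN_gcba} to get the \emph{plain} $(2,\cdot)$-strainer $(p,x_0)$ on a neighborhood of the whole slice $S_{\epsilon_0}(x_0)\cap S_t(p)$ (no condition involving $o$, so the deep points cause no trouble), and use Lemma~\ref{lem_extrelstr} plus openness to get the \emph{extended} condition for $(p,x_0,o)$ only along the boundary circle $S_{\epsilon_0}(x_0)\cap S_t(p)\cap S$, where it does hold since those points are level with $x_0$ with respect to $d_o$. This combination is exactly what the proof of Proposition~\ref{prop_fiberhom} uses (plain straining in the interior, the extended condition only near the boundary fiber), and it is made uniform in $t$ by fixing the scale $\epsilon_0$ first and then choosing $s_0$ by compactness at that fixed scale — note the paper quantifies over levels $t$ at one scale $\epsilon_0$, not over all radii $\epsilon$ simultaneously. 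Two smaller points: the constants must be tracked ($\delta=\delta_0/4$ so that Lemma~\ref{lem_extrelstr} lands you back at $\delta_0$, and $4\delta<\tfrac{1}{128}$ for Proposition~\ref{prop_fiberhom} with $k=2$); and your Jordan step silently assumes $\hat\Pi_y\setminus \bar B_\epsilon(x_0)\neq\emptyset$ — the paper avoids this by allowing the case $\bar B_{\epsilon_0}(x_0)\cap S_t(p)\cap S=S_t(p)\cap S$ and only claiming contractibility of $B_{\epsilon_0}(x_0)\cap S_t(p)\cap S$ inside the closed-ball intersection, which suffices for Definition~\ref{defn: loc}.
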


\begin{proof}
Let $\delta_0$ be the constant from Proposition~\ref{prop_contrhemi} and set   $\delta= \delta_0/4$. Let $r_p$ be as in Corollary~\ref{cor_S2}.
By \cite[Proposition~9.4]{LN_gcba} and Lemma~\ref{lem_extrelstr}, we can	choose $r_0< r_p$  such that $p$ is a $(1,\delta)$-strainer in $\dot B_{r_0} (p)$ and
	$(p,o)$ is an extended $(1,\delta)$-strainer on $\dot B_{r_0} (p) \cap S$.

We fix an arbitrary
$x \in  \dot B _{r_0} (p)\cap S$  and set $t_0:= d_p (x)$. In addition, we fix a positive number $\rho_0<r_0-t_0$.

\medskip

\noindent{\bf Sublemma.}
There exists a positive  $\epsilon_0 <\rho_0$ and a positive $s_0<1$
such that for  all $t$ with  $|t-t_0|<s_0\cdot\eps_0$  the intersection of spheres  $S_{\epsilon_0} (x)\cap S_t (p) \cap S$ is
homeomorphic to $\mathbb S^1$.
\medskip

 We apply  \cite[Proposition~9.4]{LN_gcba} and Lemma~\ref{lem_extrelstr} and find   $\epsilon_0 <\rho_0$ small enough, so that
$(p,x)$ is a $(2,\delta_0)$-strainer  in $\dot B_{2\epsilon_0}(x)\cap S_{t_0}(p)$ and $(p,x,o)$ is an extended $(2,\delta _0)$-strainer  in
$\dot B_{2\epsilon_0 }(x)\cap S_{t_0}(p)\cap S$.

Using the openness of the strainer property,
we find some small $s_0>0$, such that
for all $t$ with $|t-t_0|<s_0\cdot\eps_0$, the pair $(p,x)$ is a  $(2,\delta _0)$-strainer  in $\dot B_{2\eps_0}(x)\cap S_t(p)$  and the triple  $(p,x,o)$ is an extended $(2,\delta _0)$-strainer  in $\dot B_{2\eps_0}(x)\cap S_t(p)\cap S$.

We apply Proposition~\ref{prop_fiberhom} and deduce that  for all such $t$
the intersection
$S_{\eps_0} (x)\cap S_t(p)\cap \bar B_R(o)$ is a homology 2-manifold with boundary $S_{\eps_0} (x)\cap S_t(p)\cap S$.
By Theorem~\ref{thm: bing},
these intersections  $S_{\eps_0} (x)\cap S_t(p)\cap \bar B_R(o)$ are  2-manifolds with boundary $S_{\eps_0} (x)\cap S_t(p)\cap S$.

By our choice of $\delta _0= 4\cdot\delta$, we may apply Proposition~\ref{prop_contrhemi}.  By possibly making $\epsilon_0$ and $s_0$ even smaller, we deduce that all
intersections
$S_{\eps_0} (x)\cap S_t(p)\cap \bar B_R(o)$ are contractible and therefore homeomorphic to closed discs. Hence their boundaries
$S_{\eps_0} (x)\cap S_t(p)\cap S$ are circles. This finishes the proof of the  sublemma.
\medskip

Now we can easily finish the proof of the lemma.
By the choice of $r_0, \rho_0$ and Corollary~\ref{cor_S2}, any fiber $S_t(p)\cap S$  is homeomorphic to
$\mathbb S^2$.

In order to verify the uniform local contractibility of the fibers of the restriction of $d_p$,
we will argue that for every $t$ with $|t-t_0|<s_0\cdot\eps_0$, the set
$B_{\eps_0} (x)\cap S_t(p)\cap S$ is contractible inside
	$B_{\rho_0} (x)\cap S_t(p)\cap S$.

In the same parameter range as above, $B_{\eps_0} (x) \cap S_t(p) \cap S$ is an open subset of the 2-sphere $S_t (p) \cap S$ whose topological  boundary inside
$S_t(p) \cap S$ is contained in the circle $S_{\eps_0} (x)\cap S_t(p)\cap S$. Therefore, by the Jordan curve theorem,
$\bar B_{\eps_0}  (x) \cap S_t (p) \cap S$ is either  a topological disc or all of $S_t(p) \cap S$.
In both cases, $B_{\eps_0} (x) \cap S_t(p) \cap S$ is contractible inside $\bar B_{\epsilon_0} (x) \cap S_t (p) \cap S$.  Therefore,  $B_{\eps_0} (x) \cap S_t(p) \cap S$ is contractible inside the larger set $  B_{\rho_0} (x)\cap S_t(p)\cap S$.
\end{proof}

\subsection{The main results}
We can now finish:

\begin{proof}[Proof of Theorem~\ref{thm_main}]
	Let $p\in S=S_R(o)$ be arbitrary.  Choose $r_p$ as in Corollary \ref{cor_S2} and $r_0<r_p$ as in Lemma~\ref{lem: tech}.
The  distance function 	 $d_p:\dot B_{r_0}(p)\cap S\to(0,r_0)$  has uniformly locally contractible fibers homeomorphic to $\mathbb S^2$ by Corollary~\ref{cor_S2} and   Lemma~\ref{lem: tech}. By Lemma~\ref{lem_almostiso} and Theorem~\ref{thm: hurewicz}, $d_p$
	is a fiber bundle.
 Hence $\dot B_{r_0}(p)\cap S$ is homeomorphic to $\Sph^2\times(0,r_0)$.  Therefore,
 $B_{r_0}(p)\cap S$ is homeomorphic to a 3-ball. Since $p$ was arbitrary, $S=S_R(o)$ is a 3-manifold as required.
\end{proof}

Now the main result of \cite{T_tame} implies Theorem~\ref{thm_gromov}.
\medskip

Before turning to  Corollary~\ref{cor:th} and Corollary~\ref{cor-jan}, we recall some notation.
We fix a CAT(0) $4$-manifold $X$ and a point $o\in X$.

For $R>r>0$ we have a canonical \emph{geodesic contraction map} 
\[c_{R,r}:S_R(o)\to S_r(o)\] 
defined by sending $y\in S_R(o)$ to the 
point of intersection of the geodesic $oy$ with $S_r(o)$.    

The ideal boundary $\partial _{\infty} X$ is canonically identified with the \emph{inverse limit for the bonding maps} $c_{R,r}$   \cite[Section II.8.5]{BH}, \cite[p. 872]{Fuji}:
\[\partial _{\infty} X= \varprojlim S_r (o) \;.\]
The canonical map $c_{\infty ,r}:\partial _{\infty } X \to S_r(o)$, the inverse limit of the bonding maps $c_{R,r}$, sends
a point $\xi \in \partial _{\infty } X$ to the intersection with $S_r(o)$ of the ray starting in $o$ and determined by $\xi$ \cite[Section II.8]{BH}.

We  recall in a special case the notions of 
cell-like  mappings, referring  to  \cite{Mitchell, T_tame} for details. A compact subset $K$ of a $3$-manifold $M$ is called \emph{cell-like}, if $K$ is contractible in any neighborhood of $K$ in $M$. 
 A map
$f:M\to N$ between $3$-manifolds is called a \emph{cell-like map}
if  the preimage of any point is cell-like.

The following result is a combination of \cite[Theorem 1.2, Corollary 1.4]{Mitchell} and \cite[Corollary 2.2]{Mc2}:

\begin{thm} \label{lem: siebenmann}
Let $M$ be a compact $3$-manifold and $f:M\to M$ be a  surjective cell-like map.
Then $f$  
	 is a uniform limit of homeomorphisms.
	For any open subset $U\subset M$, the restriction
	$f:f^{-1} (U)\to U$ is a homotopy equivalence.	 
\end{thm}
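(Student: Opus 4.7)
The plan is to assemble the statement from two classical ingredients in the decomposition theory of 3-manifolds; since both conclusions are already in the literature, the work is mainly to line up the hypotheses and combine them. The underlying three-dimensional phenomenon making everything go through is McMillan's cellularity criterion, which forces cell-like compacta in a 3-manifold to have arbitrarily small cellular neighborhoods.

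For the first assertion, that $f$ is a uniform limit of homeomorphisms, I would invoke Mitchell's approximation theorem (his Theorem~1.2 together with Corollary~1.4) directly: a cell-like surjection between compact 3-manifolds is a near-homeomorphism. The hypotheses are exactly those we assume. Internally, Mitchell's proof runs as follows. The fibers $\{f^{-1}(y)\}_{y\in M}$ form an upper semicontinuous decomposition of $M$ with cell-like elements. Using McMillan's cellularity criterion in dimension three, each fiber has small cellular neighborhoods, which allows one to verify Bing's shrinking criterion: for every $\epsilon>0$ one constructs a homeomorphism $h_\epsilon\colon M\to M$ that squeezes each fiber of $f$ into a preassigned small open set. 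Composing $h_\epsilon$ with a choice of point in each fiber yields a homeomorphism of $M$ within uniform distance $\epsilon$ from $f$. For our purposes this is used as a black box.

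For the second assertion, that $f\colon f^{-1}(U)\to U$ is a homotopy equivalence for every open $U\subseteq M$, the direct route is McMillan's Corollary~2.2. A self-contained path also exists: the restriction is again a proper cell-like surjection between open subsets of a 3-manifold, hence between locally compact finite-dimensional ANRs; any proper cell-like map between such ANRs is a hereditary shape equivalence, and between ANRs with the homotopy type of CW complexes shape equivalences coincide with genuine homotopy equivalences. Either way, the conclusion transfers cleanly.

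The main obstacle, if one attempted a self-contained proof, is Mitchell's approximation theorem; it is the delicate input, and it is exactly the reason why the three-dimensional case is tractable while higher-dimensional analogues require substantially more (such as the disjoint disks property on the target). In our application the result is invoked off the shelf, so the only task on our side is to verify that a cell-like surjection between compact 3-manifolds meets its hypotheses, which it does by definition.
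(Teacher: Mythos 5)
Your proposal is correct and follows essentially the same route as the paper, which states this result precisely as a combination of Mitchell's approximation theorem (Theorem~1.2, Corollary~1.4) and McMillan's Corollary~2.2, exactly the two ingredients you invoke. The extra sketch of Mitchell's shrinking argument and the alternative hereditary-shape-equivalence route are fine but not needed beyond the citations.
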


Now we turn to the proof of   Corollary~\ref{cor:th}, essentially contained in the proof of
 \cite[Theorem 4.3]{T_tame}. 

\begin{proof}[Proof of Corollary~\ref{cor:th}]
By Theorem~\ref{thm_gromov}, Theorem~\ref{thm_main} and 
\cite[Theorem 4.3]{T_tame} all distance spheres $S_R(o)$ are homotopy equivalent to $\R^4\setminus \{o\}$, hence to $\mathbb S^3$.
By Theorem~\ref{thm_main} and the resolution of the Poincar\'e conjecture, any sphere $S_R(o)$ is homeomorphic to $\mathbb S^3$.

Hence, for any geodesic contraction 
 $c_{R,r}:S_R(o)\to S_{r} (o)$ the preimage of any point is contained in a subset homeomorphic to $\R^3$.
Combining  \cite[Corollary~2.10  and Theorem~2.13]{T_tame} and the subsequent remark, we deduce that $c_{R,r}$
   are cell-like maps.  
   
   By Theorem~\ref{lem: siebenmann}, for any open contractible set $W\subset  S_r(o)$, the preimages 
   $c_{R,r}^{-1} (W)$ are contractible for all $R>r$.   This implies that
   the map $d_o :X\setminus \{o\} \to (0,\infty)$ has  uniformly locally contractible fibers.

Hence an application of Theorem~\ref{thm: trivialbundle} completes the proof.
\end{proof}

In the proof of the Corollary \ref{cor-jan} below we assume some knowledge of the ideal boundary, its cone topology and the canonical compactification $\bar X= X\cup \partial _{\infty} X$ of 
a CAT(0) space $X$.

	\begin{proof}[Proof of Corollary \ref{cor-jan}]
		We fix a point $o$ in the CAT(0) $4$-manifold $X$.
		By Corollary~\ref{cor:th}, every distance sphere $S_R(o)$ is homeomorphic to 
		$\mathbb S^3$.  
		
		As we have seen in the proof of Corollary~\ref{cor:th} above,
		all 
		geodesic contractions $c_{R,r}:S_{R} (o) \to S_{r} (o)$ are cell-like maps.  
	Hence, due to Theorem~\ref{lem: siebenmann}, the map $c_{R,r}$ is a uniform limit of homeomorphisms.  
		Thus, an application of the main result of \cite{Brown} implies that the ideal boundary
		$\partial _{\infty } X$ is homeomorphic to $\mathbb S^3$.
		Moreover, the proof in 
		\cite{Brown} shows that the canonical projection 
		$$c_{\infty, r} :\partial _{\infty} X\to S_r(o)$$
		is a uniform limit of homeomorphisms.

		Consider the map $f:\bar X  \to (0,1]$ which sends $\partial _{\infty} X$   to $1$ and is defined as  $$f(x):= \frac   {d_o(x)} {1+d_o (x)} $$
		on $X$. The map $f$ is continuous on $\bar X$ and coincides on $X$ with $d_o$ up to a homeomorphism of the image interval. Hence $f$ is a fiber bundle on $X\setminus \{o \}$.  We claim
		that $f$ is also a fiber bundle on all of $\bar X\setminus \{o\}$.  
		
		All fibers of $f$ are $3$-spheres. Moreover, for any open contractible  $W$ in any sphere $S_r(o)$ the preimages $c^{-1} _{R,r}	 (W)$ are contractible, for all $R\in [r, \infty]$, since all geodesic contractions including $c_{\infty,r}$ are cell-like.
		This implies that  $f$ has uniformly locally contractible fibers.
		
		 Theorem~\ref{thm: trivialbundle} implies the claim. Therefore, $\bar X\setminus \{o\}$
		 is homeomorphic to $\mathbb S^3\times (0,1]$. Thus, $\bar X$, the one-point-compactification of this space, is homeomorphic to the $4$-ball. 
	\end{proof}

\section{Structure of fibers of extended strainer maps} \label{sec: last}

\subsection{Generalized distance functions}

In this final section, we want to use information on limits of distance maps to conclude topological properties
of their fibers. This requires a slight generalization of the notion of distance functions and strainer maps.
For this purpose we make the following definitions, see also \cite[Section~5]{N_asym}. Recall that a convex function on a CAT(0) space attains its minimal value on a closed convex set or doesn't attain
a minimum at all.
A {\em generalized distance function} on a CAT(0) space $X$ is a convex function $b:X\to\R$ whose (negative) gradient has unit norm on the complement of its
minimal set:
\[\|\nabla_x (-b)\|:=\max\left\{0,\limsup_{y\to x}\frac{b(x)-b(y)}{d(x,y)}\right\}\equiv 1.\]
This definition unifies the concept of distance functions to convex subsets and Busemann functions.  Adding a constant to a generalized distance function results in a generalized distance function.
On every bounded open set, a generalized distance function equals the distance function to a convex set up to a constant.
In particular, the integral curves of the `negative gradient' of a generalized distance function are geodesics and the {\em negative gradient}
$\nabla_x (-b)\in\Si_x X$ is well-defined.

A map $F:X\to \R^k$  will be called a {\em generalized distance map}, if all coordinates $f_i$ of $F$ are generalized distance functions.

A \emph{generalized distance map} $F:X\to \R^k$  with components $f_i$ will be called a generalized $(k,\delta)$-strainer map in a subset $A\subset  X$ if the minimum sets of any  $f_i$ is disjoint from $A$  and the following holds true. For any $x\in A$, the negative gradients
$ \nabla_x (-f_i)\in\Si_x X$ form a $\delta$-spherical $k$-tuple  of directions in $\Si _x X$.

Two generalized distance maps $F,\bar F:X\to \R^k$ with coordinates $f_i, \bar f_i$ are
\emph{opposite
generalized $(k,\delta)$-strainer maps} on $A\subset X$, if for all $x\in A$, the corresponding
$k$-tuples  of negative gradients are opposite $(k,\delta)$-strainers.

  As for (non-generalized) distance maps, the set of points $x\in X$,  at which a generalized distance map $F:X\to \R^k$ is a generalized $(k,\delta)$-strainer map is open. Similarly, the set
  of points at which  a pair of distance maps $F, \bar F$ are opposite generalized $(k,\delta)$-strainers
  is open.

Let $F:X\to\R^k$ be a generalized distance map with coordinates $f_i$ and denote by $b$ another generalized distance function.
Suppose that $F$ is a generalized $(k,\delta)$-strainer map on a subset $A\subset X$ and $b$ does not attain its minimum on $A$.
Then the map $\hat F=(F,b):X\to\R^{k+1}$ is called a {\em  generalized extended $(k,\delta)$-strainer map} on $A$,
if at all points $x\in A$ and for every antipode $w_x\in\Si_x X$ of $\nabla_x(-b)$ the following holds, for all $1\leq i\leq k$:
\[\angle_x(\nabla_x(-b),\nabla_x(-f_i))<\frac{\pi}{2}+\delta \; \; \text{and}  \; \; \angle_x(w_x,\nabla_x(-f_i))<\frac{\pi}{2}+\delta.\]
The set of points where a given generalized distance map is a generalized extended $(k,\delta)$-strainer map is open, again due to the semi-continuity of angles.

All statements about (extended) strainer maps transfer to the generalized setting.
For instance, the concept of `straining radius' introduced in \cite[Section~7.5]{LN_gcba} generalizes as follows.
Let $F:X\to\R^k$ be a generalized distance map with coordinates $f_i$.
Suppose that $F$ is a generalized  $(k,\delta)$-strainer map at a point $x$.
Then the {\em straining radius} is the  largest radius
$\si_x$ with the following property. For every $y\in B_{\si_x}(x)$ and $1\leq i\leq k$ let $\bar p_i$ be any point with $d(x,\bar p_i)=1$
and such that the direction at $x$ of the geodesic $x\bar p_i$ is antipodal to $\nabla(-f_i)$. Then, $F$ and $\bar F=(d_{\bar p_1},\ldots,d_{\bar p_k})$
are opposite generalized $(k,2\delta)$-strainer maps on $B_{\si_x}(y)$. The proof of positivity of $\si_x$ is identical to \cite[Lemma~7.10]{LN_gcba}.
Similarly, we define an `extended straining radius'.
If $\hat F=(F,b)$ is an extended generalized $(k,\delta)$-strainer map at $x$, then the {\em extended straining radius}
is the largest radius  $\hat\si_x\leq\si_x$ such that for all
$y\in B_{\hat\si_x}(x)$ the map $\hat F$ is an extended generalized $(k,2\delta)$-strainer map on $B_{\hat\si_x}(y)$.

Let $(X_n,x_n)$ be a sequence of  pointed locally compact CAT(0) spaces converging in the
pointed Gromov--Hausdorff topology to a space $(X,x)$, Then, for  any sequence of generalized distance functions $f_n:X_n \to \R$ with uniformly bounded $f_n (x_n)$, we find a subsequence converging to a generalized distance  function $f:X\to \R$.

Let $F_n:X_n\to \R^k$ be a sequence of generalized distance maps converging to a generalized distance map $F:X\to \R$.   The semi-continuity of angles under convergence implies
the following, as in \cite[Lemma~7.8]{LN_gcba}. If $F$ is a generalized (extended) $(k,\delta)$-strainer at $x$ then $F_n$ is a generalized (extended) $(k,\delta)$-strainer at $x_n$, for all $n$ large enough.  Moreover,
for all $n$ large enough, the (extended) straining radius $\sigma _{x_n}$ of $F_n$ at $x_n$ is
bounded from below by half of the (extended) straining radius $\sigma _x$.

\subsection{Local topology of halfspaces}
The following result on strainer maps translates to the generalized setting as well. But since we only apply it in the non-generalized setting, and
since this allows us to directly rely on  \cite[Theorem~9.1]{LN_gcba}, we refrained from formulating a generalized version
even though proofs extend literally.

\begin{prop}\label{prop_defohs}

	Let $\hat F=(F,d_o):X\to\R^k$ be a distance map and $\delta\leq\frac{1}{64\cdot k}$. Suppose that $\hat F$ is an extended $(k,\delta)$-strainer map at a point $x$
	with extended strainer radius $\hat\si_x$.
	Denote by  $W$ a ball $B_{r}(x)$ with radius $r\leq\hat\si_x$. Then  there
	exists  a deformation retraction
	of $W$ onto the halfspace $\hat\Pi_x^{+}\cap W$.
\end{prop}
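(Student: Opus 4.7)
I will adapt the iterative construction of \cite[Theorem~9.1]{LN_gcba}, modifying it so that the flow drives each point of $W$ into the halfspace $\hat\Pi_x^+\cap W$ rather than into a fiber of the whole map $\hat F$.

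The key geometric input is an interpretation of the extended strainer condition: at every point $y\in W$, the direction $\nabla_y(-d_o)$ makes angle at most $\frac{\pi}{2}+\delta$ with each strainer direction $\nabla_y(-f_i)$, and the same bound holds for the antipode of $\nabla_y(-d_o)$. Hence the ray from $y$ towards $o$ is almost tangent to the $F$-fibers. Concretely, flowing $y$ towards $o$ for time $t$ decreases $d_o(y)$ by roughly $t$ while perturbing $F(y)$ by only $O(\delta\cdot t)$; conversely, correcting $F(y)$ via the $L$-openness of $\hat F$ provided by Lemma~\ref{lem_almostiso} changes $d_o(y)$ only by a comparable controlled amount, because $\nabla_y(-d_o)$ is almost orthogonal to each direction used for the correction.

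First, I would mirror the argument of \cite[Theorem~9.1]{LN_gcba} and associate to each $y\in W$ a sequence of points $y_0:=y,y_1,y_2,\dots$ in $W$ defined iteratively by reducing the \emph{discrepancy vector}
\[\Delta(y_n):=\bigl(F(y_n)-F(x),\,\max\{0,d_o(y_n)-d_o(x)\}\bigr)\in\R^k\times[0,\infty)\]
by a definite factor (say $1/2$) at each step. The step combines a short geodesic towards $o$ (active only when $d_o(y_n)>d_o(x)$) with a displacement supplied by the $L$-openness of $F$; the extended strainer estimates guarantee that these two motions do not interfere destructively, so that both components of $\Delta$ contract simultaneously. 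Step sizes then decrease geometrically, the total arc length is bounded by a fixed multiple of $|\Delta(y)|$, and the hypothesis $r\leq\hat\sigma_x$ ensures that all iterates, and all concatenating geodesics, remain inside $W$ and in the domain where the extended strainer bounds apply.

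The sequence converges to a point $y_\infty\in\hat\Pi_x^+\cap W$. Reparametrizing the piecewise-geodesic path from $y=y_0$ to $y_\infty$ continuously in a parameter $t\in[0,1]$ produces the homotopy $H\colon W\times[0,1]\to W$ with $H(\cdot,0)=\operatorname{id}$ and $H(\cdot,1)$ landing in $\hat\Pi_x^+\cap W$, with points of $\hat\Pi_x^+\cap W$ held fixed by construction. The main obstacle I expect is the continuity of this flow across the ``corner'' stratum where the halfspace coordinate $\max\{0,d_o(y)-d_o(x)\}$ vanishes while $F(y)-F(x)$ is still non-zero: there the $d_o$-push must switch off while the strainer corrections continue, and handling this smoothly requires weighting the two motions so that the combined step depends continuously on $\Delta(y)$ including its boundary strata. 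The bound $\delta\leq \frac{1}{64\cdot k}$, which strengthens the one needed in \cite[Theorem~9.1]{LN_gcba}, is what absorbs the extra error from the halfspace coordinate and leaves room for this combined flow to still be a contraction in $|\Delta|$.
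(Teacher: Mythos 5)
Your geometric input is the right one (the extended strainer condition makes the direction toward $o$ and its antipodes almost orthogonal to the straining directions, so a push toward $o$ and a correction toward the $F$-fiber interfere only at order $\delta$), but there is a genuine gap at exactly the point you flag: continuity. The displacement ``supplied by the $L$-openness of $F$'' is a pointwise existence statement; it provides no continuous (or even canonical) choice of correcting step, so your per-point sequence $y_0,y_1,\dots$ and the resulting broken-geodesic paths need not depend continuously on $y$, and the reparametrized flow need not be a homotopy at all --- and this is in addition to the corner-stratum problem where the $d_o$-push switches off, which you acknowledge but do not resolve. In \cite{LN_gcba} the retraction onto a strainer fiber is not deduced from openness; it is built explicitly from geodesic motions toward the strainer points $p_i$ and toward opposite points $q_i$, and producing a continuous selection is the whole content of that construction. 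Your plan would have to redo that construction while coupling it to the extra $d_o$-coordinate and its $\max\{0,\cdot\}$ truncation.

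The paper's proof sidesteps both difficulties by iterating homotopies rather than points. It takes the continuous retraction $\Phi$ of $W$ onto the full fiber $\Pi_x\cap W$ from \cite[Theorem~9.1]{LN_gcba} as a black box and concatenates it with the geodesic contraction $\varphi$ toward $o$, stopped on $\bar B_{s_0}(o)$ where $s_0=d(o,x)$ (this stays in $W$ by convexity). The extended strainer condition enters only through first-variation estimates: with $M(z)=\max_i|d(p_i,z)-d(p_i,x)|$, the value of $d_o$ drifts along $\Phi$ by at most $4\delta\cdot 8k\cdot M(z)\le\tfrac12 M(z)$, and $M$ drifts along $\varphi$ with velocity at most $4\delta$, so one round $\Psi$ of the concatenation fixes $\hat\Pi_x^{+}\cap W$, ends in $\bar B_{s_0}(o)$, and satisfies $M(\Psi(z,1))\le 2\delta\,M(z)$ for $z\in\bar B_{s_0}(o)\cap W$. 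Iterating $\Psi$ on shrinking time intervals gives homotopies with geometrically decaying discrepancy and uniformly bounded flow-line length, whose uniform limit $\Psi_\infty$ is automatically continuous and is the desired deformation retraction onto $\hat\Pi_x^{+}\cap W=\Pi_x\cap\bar B_{s_0}(o)\cap W$. If you wish to salvage your version, you would need to replace abstract openness by the explicit geodesic corrections of \cite{LN_gcba} and prove continuity of your weighted combined step across the stratum $d_o=d_o(x)$; the paper's alternating scheme is precisely the device that makes this unnecessary.
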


\includegraphics[scale=0.4,trim={2cm -1cm 0cm 14cm},clip]{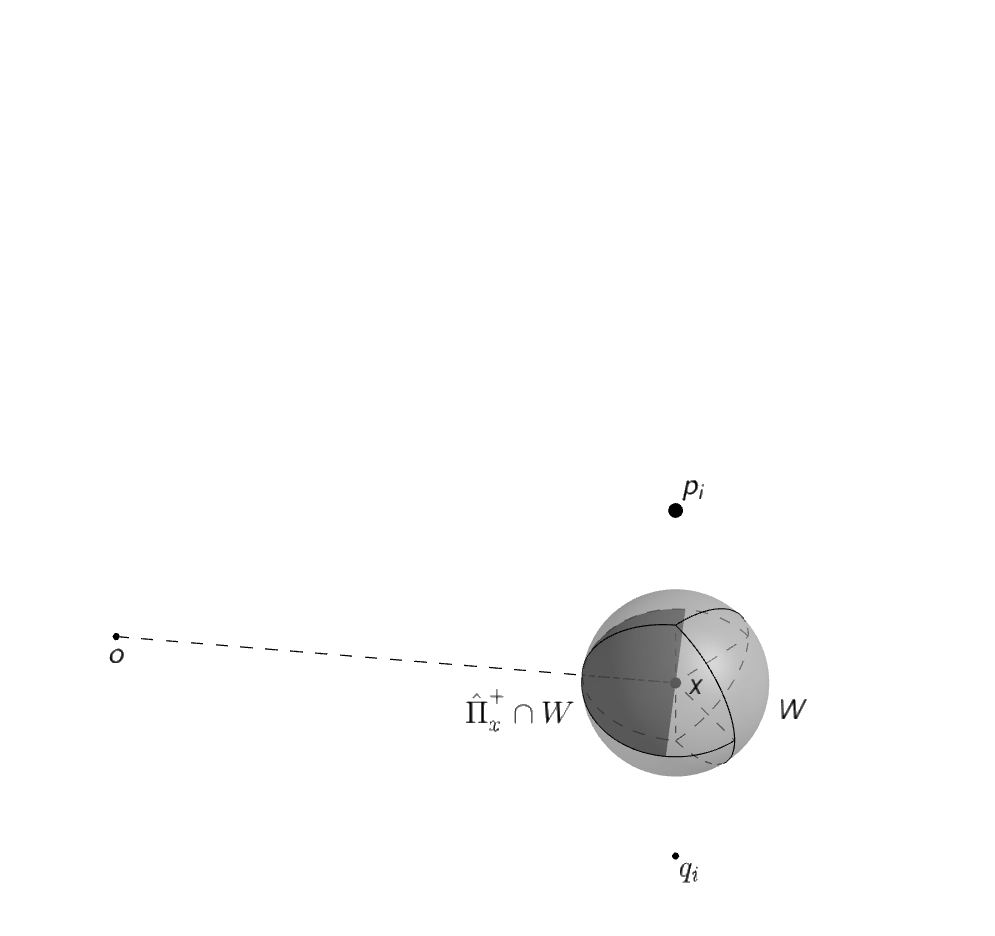}

\begin{proof}
The proof is an adaption of the proof of \cite[Theorem~9.1]{LN_gcba}.
For convenience of the reader, we stick to the notation of \cite[Theorem~9.1]{LN_gcba}.
Hence, $(p_i)$ denotes the strainer defining $F$ and $(q_i)$ is a $k$-tuple in $X\setminus W$
such that $x$ lies on the geodesic $p_i q_i$ and the tuples $(p_i)$ and $(q_i)$ are opposite $(k,2\delta)$-strainers in $W$.

Set $s_0:=d(o,x)$ and define the function $M:W\to\R$ by
\[M(z):=\max\limits_{1\leq i\leq k}|d(p_i,z)-d(p_i,x)|.\]

Denote by  $\Phi:W\times[0,1]\to W$ the homotopy which retracts $W$ onto $\Pi_x\cap W$, provided by \cite[Theorem~9.1]{LN_gcba}. Recall that  the length of the path $\ga_z(t):=\Phi(z,t)$ is at most $8k\cdot M(z)$.
Moreover, $\ga_z(t)$ is an infinite piecewise geodesic all of whose segments are directed towards one of the points $p_i$ or $q_i$.  By the first variation formula,  the value of $d_o$ changes along $\ga_z(t)$ with velocity at most $4\delta$.
Hence, for all $t\in [0,1]$,
\begin{equation}
|d_o(z)- d_o(\gamma _z(t))| \leq 4\delta \cdot 8k \cdot M(z) \leq \frac 1 2\cdot M(z)\;.
\label{eq:d_o_est}
\end{equation}

Denote by $\varphi:W\times[0,1]\to W$ the flow which deformation retracts $W$ onto $\bar B_{s_0}(o)\cap W$. More precisely, $\varphi$
moves a point $z\in W\setminus \bar B_{s_0}(o)$ towards $o$ at unit speed until it reaches $S_{s_0}(o)$ and then stops.
Note that  $\varphi$ does indeed preserve $W$, by the CAT(0) property of $X$.
We define a concatenated homotopy $\Psi$ by setting
$\Psi(z,t)=\Phi(z,2t)$ for $t\leq\frac{1}{2}$ and  $\Psi(z,t)=\varphi(\Phi(z,1),2t-1)$ for $t\geq\frac{1}{2}$.

By definition, $d(o,\Psi(z,1))\leq s_0$, for all $z\in W$ and $\Psi$ fixes $\hat\Pi_x^{+}\cap W$.

The length of the $\Psi$-flow line of a general point $z\in W$ is bounded above by $8k\cdot M(z)+2r$.
However, if $d(o,z)\leq s_0$ holds, then the length of the $\Psi$-flow line  starting at $z$ is at most $(1+4\delta)\cdot 8k\cdot M(z)$ by (\ref{eq:d_o_est}).

 Along the homotopy
 $\varphi$ the value of $M$ changes at most with velocity $4\delta$, due to the first variation formula.  Hence, for any $z$ with $d(o,z) \leq s_0$,
we deduce, using $M(\Phi (z,1))=0$ and
(\ref{eq:d_o_est}):
 \[M(\Psi (z,1)) \leq 2\delta\cdot  M(z) \;.\]

To obtain the required deformation retraction, we take a limit of iterated concatenations of $\Psi$. More precisely, for $m\geq 1$, we define homotopies  $\Psi_m:W\times[0,1]\to W$ as follows. The homotopy  $\Psi_m$ is the identity on the interval $[1-2^{-m}, 1]$  and it equals
a rescaling of $\Psi$ on any of the intervals $[1-2^{-l}, 1-2^{-l-1}]$, for $l=0,...,m-1$.

The above inequalities  imply  $M(\Psi _m (z,1))  \leq (2\delta)^m \cdot M(z)$, by induction.
Moreover, the flow line of $\Psi _m$  starting at $z\in W$ has length uniformly bounded above by $2r+24k\cdot M(z)$.
Therefore, $(\Psi_m)$ converges uniformly to a homotopy $\Psi_\infty:W\times[0,1]\to W$ as required.
\end{proof}

Recall that a closed subset $\Pi$ of a topological space $Y$ is called  {\em homotopy negligible
in $Y$} if for each open set $U$ of $Y$ the inclusion $U\setminus\Pi\to U$ is a homotopy
equivalence. If $Y$ is an ANR, this condition is satisfied, if any point $z\in \Pi$ has a
neighborhood basis  $\mathcal U_z$  of contractible neighborhoods $U_z$ with contractible  complements  $U_z \setminus \Pi$, \cite[Theorem 1]{eells}.
 In our setting, we have:

\begin{cor}\label{cor_Z}
Let $\hat F=(F,d_o):U\to\R^k$ be an
extended $(k,\delta)$-strainer map on an open set $U\subset X$ with $\delta\leq\frac{1}{64\cdot k}$.
Then, for every $x\in U$, the halfspace $\hat\Pi_x^+$ is an ANR and the fiber $\hat\Pi_x$ is homotopy negligible in  $\hat\Pi_x^+$.
\end{cor}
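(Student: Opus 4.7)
The plan is to treat the two assertions separately, both leaning on Proposition~\ref{prop_defohs} applied at various points of $\hat\Pi_x^+$. For the ANR property it suffices, by the equivalence of ANR and local contractibility in finite dimension, to exhibit arbitrarily small contractible neighborhoods in $\hat\Pi_x^+$ of every point $z\in\hat\Pi_x^+$. If $d_o(z)<d_o(x)$, then the defining inequality of the halfspace is strict near $z$, so $\hat\Pi_x^+$ coincides there with the fiber $\Pi_x$ and Theorem~\ref{thm: fibration} supplies contractible neighborhoods. If $d_o(z)=d_o(x)$, then $\hat F(z)=\hat F(x)$, hence $\hat\Pi_z^+=\hat\Pi_x^+$, and Proposition~\ref{prop_defohs} applied at $z$ shows that arbitrarily small balls $B_r(z)$ deformation retract onto $\hat\Pi_x^+\cap B_r(z)$; the latter are therefore contractible.

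For homotopy negligibility I would check Eells' sufficient condition at each $z\in\hat\Pi_x$: the basic neighborhoods $U_z=\hat\Pi_x^+\cap B_r(z)$ are contractible by the preceding step, and it remains to show that the complements
\[
U_z\setminus\hat\Pi_x\;=\;\Pi_x\cap B_r(z)\cap\{d_o<d_o(x)\}
\]
are likewise contractible. The strategy is to compare $U_z\setminus\hat\Pi_x$ with the nested closed sub-halfspaces $A_\eps:=\Pi_x\cap B_r(z)\cap\{d_o\le d_o(x)-\eps\}$, $\eps>0$. Each $A_\eps$ is contractible by a second application of Proposition~\ref{prop_defohs} at a point $y_\eps\in\Pi_x\cap B_r(z)$ with $d_o(y_\eps)=d_o(x)-\eps$ chosen close to $z$, for then $A_\eps$ coincides with the halfspace $\hat\Pi_{y_\eps}^+\cap B_r(z)$. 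Contractibility of $U_z\setminus\hat\Pi_x$ will then follow once a homotopy equivalence $A_\eps\simeq U_z\setminus\hat\Pi_x$ is produced for sufficiently small $\eps$.

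To build this equivalence I would adapt the iterative limit construction of the retraction $\Psi_\infty$ from Proposition~\ref{prop_defohs}: replace the radial flow $\varphi$, which halts at level $\{d_o=d_o(x)\}$, by a flow $\tilde\varphi_\eps$ halting at $\{d_o=d_o(x)-\eps\}$, and iterate the composition $\Phi\circ\tilde\varphi_\eps$ so that the radial descent is performed first at each step. The estimates of Proposition~\ref{prop_defohs} carry over: $\tilde\varphi_\eps$ decreases $d_o$ monotonically, $\Phi$ perturbs $d_o$ by at most $M/2$, and $M$ contracts geometrically by the factor $2\delta<1$. The limit of the iterates is a retraction $f:B_r(z)\to A_\eps$, and post-composing the iteration homotopy with $\Phi(\cdot,1)$ yields a homotopy of $f|_{U_z\setminus\hat\Pi_x}$ to the identity that stays inside $\Pi_x\cap B_r(z)$. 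The main obstacle is the quantitative verification that this homotopy is confined to the open halfspace $\{d_o<d_o(x)\}$: starting from $y\in\Pi_x$ one has $M(y)=0$, so the first move by $\tilde\varphi_\eps$ raises $M$ only by a small multiple of $\eps\sin\delta$, and a geometric-series estimate ensures that the cumulative $d_o$-perturbation stays strictly below $\eps$. This bookkeeping, parallel to the convergence argument at the end of the proof of Proposition~\ref{prop_defohs}, is the delicate step of the argument.
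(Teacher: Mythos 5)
Your first half matches the paper: local contractibility of $\hat\Pi_x^+$ is obtained exactly as in the paper's proof, by treating interior points (where the halfspace locally coincides with the $F$-fiber, handled there via \cite[Theorem~9.1]{LN_gcba} rather than Theorem~\ref{thm: fibration}, an immaterial difference) and points of $\hat\Pi_x$ (via Proposition~\ref{prop_defohs} applied at $z$, using $\hat\Pi_z^+=\hat\Pi_x^+$), and the reduction of negligibility to Eells' criterion is also the paper's framing. The divergence, and the problem, is in the remaining step: contractibility of $U_z\setminus\hat\Pi_x=\Pi_x\cap B_r(z)\cap\{d_o<d_o(x)\}$.

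Here there are two genuine gaps. First, the contractibility of $A_\eps=\Pi_x\cap B_r(z)\cap\{d_o\le d_o(x)-\eps\}$ does not follow from Proposition~\ref{prop_defohs} as you invoke it: that proposition retracts a ball \emph{centered at the basepoint of the halfspace}, so applied at $y_\eps$ it yields contractibility of $\hat\Pi_{y_\eps}^+\cap B_s(y_\eps)$ for $s\le\hat\si_{y_\eps}$, not of $\hat\Pi_{y_\eps}^+\cap B_r(z)$ with the ball centered at $z$; moreover, the proof of that proposition uses that the center lies on the stopping level of the radial flow (this is where ``$\varphi$ preserves $W$ by the CAT(0) property'' enters), which fails for a ball centered at $z$ with stopping level $d_o(x)-\eps<d_o(z)$. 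Second, the homotopy equivalence $A_\eps\simeq U_z\setminus\hat\Pi_x$ is not actually constructed: the modified flow $\tilde\varphi_\eps$, its iteration, the confinement of the resulting homotopy inside the \emph{open} halfspace and inside $B_r(z)$, and the convergence of the iterates are all new constructions, not citations, and you yourself defer the decisive estimate as ``the delicate step.'' Since this is precisely where the content of the statement lies, the proof is incomplete as written. The paper avoids both issues with a softer argument: $(\hat\Pi_x^+\setminus\hat\Pi_x)\cap W$ is an open subset of the ANR $\hat\Pi_x^+$, so by \cite[Corollary VII.8.5]{Hu} it suffices to kill its homotopy groups, and any compact subset $K$ of it can be engulfed in a set of the form $\hat\Pi_w^+\cap B_s(w)$ with $w$ an interior point, $K\subset B_s(w)\subset W$ and $s\le\hat\si_w$, which is contractible by Proposition~\ref{prop_defohs} applied in exactly the form in which it was proved; the inclusion of $K$ then factors through a contractible subset. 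If you want to keep your deformation-retraction route, you would first have to reprove a version of Proposition~\ref{prop_defohs} for off-center balls and then carry out the deferred bookkeeping; the compact-exhaustion argument renders both unnecessary.
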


\begin{proof}
By \cite[Theorem~9.1]{LN_gcba}, for all $y\in\hat\Pi^+_x\setminus\hat \Pi_x$
the set  $\hat\Pi_x^+\cap B_{r}(y)$ is contractible  as a retract of $B_{r}(y)$, as long as
	the radius $r$ is less than the straining radius $\si_y$ and the difference of levels $d_o(x)-d_o(y)$.
Similarly, 	by Proposition~\ref{prop_defohs}, for all $z\in \hat\Pi_x$, the set  $\hat\Pi_x^+\cap B_{r}(z)$ is contractible  as a retract of $B_{r}(z)$
	for some radius $r<\hat\si_{z}$. Hence, $\hat \Pi _x ^+$ is an ANR.
Now for $z\in \hat\Pi_x$ set $W=B_{\hat\si_z}(z)$ as above.
It remains to show that $(\hat\Pi^+_x\setminus\hat \Pi_x)\cap W$ is contractible. Since it
is an ANR as an open subset of $\hat\Pi^+_x$, it suffices to verify that all of its homotopy groups vanish, \cite[Corollary VII.8.5]{Hu}.
This will follow, once we have shown that for any compact subset $K\subset (\hat\Pi^+_x\setminus\hat \Pi_x)\cap W$, the inclusion map $K\hookrightarrow (\hat\Pi^+_x\setminus\hat \Pi_x)\cap W$ is
nullhomotopic. By continuity of the straining radius $\hat\si_x$, for a given such set $K$, we find a point $w\in(\hat\Pi^+_x\setminus\hat \Pi_x)\cap W$
and $s\leq\hat\si_w$ with $K\subset B_s(w)\subset W$. By Proposition~\ref{prop_defohs}, $\hat\Pi^+_w\cap B_s(w)\subset (\hat\Pi^+_x\setminus\hat \Pi_x)\cap W$ is contractible and the proof is complete.
\end{proof}

\begin{proof}[Proof of Proposition~\ref{prop_fiberhom}]
	We have already seen in Corollary~\ref{cor_Z} that the halfspace  $\hat \Pi^+_x$ is an ANR.

		Assume now that $U$ is a homology $n$-manifold.
	 Since $\delta<\frac{1}{20\cdot k}$, Theorem~\ref{thm: fibration} implies that  the fibers of $F$ are homology  $(n-k)$-manifolds.
	The complement of $\hat\Pi_x$
	in $\hat\Pi_x^+$ is open in $\Pi_x$ and therefore a homology $(n-k)$-manifold. By Corollary~\ref{cor_Z}, $\hat\Pi_x$ is homotopy negligible  in  $\hat\Pi_x^+$. In particular,
	every singleton $\{y\} \subset \hat\Pi_x$ is homotopy negligible in  $\hat\Pi_x^+$ \cite[Corollary~2.6]{tor}.
	We conclude that the local homology groups $H_\ast(\hat\Pi_x^+,\hat\Pi_x^+\setminus\{y\})$
	vanish at all points $y\in\hat\Pi_x$. By \cite{Mitch}, $\hat\Pi_x$ is the boundary of a homology manifold and therefore is itself a homology manifold
	without boundary.
\end{proof}

\subsection{Contractibility of hemispheres}

At last, we provide

\begin{proof}[Proof of Proposition~\ref{prop_contrhemi}]

Suppose for contradiction that there is a sequence $\delta_l\to 0$ and distance maps $\hat F_l=(F_l,d_{q^l}): X\to \R^{k+1}$
with $F_l=(d_{p^l_1},\ldots,d_{p^l_k})$
which are extended $(k,\delta_l)$-strainer maps at points $x_l\in V$ where the statement fails.
Thus we find arbitrary small `hemispheres' around $x_l$ which are either not contractible
or not locally contractible.
More precisely, we find sequences $\eps_l\to 0$, $s_l\to 0$ and a sequence of points $y_l\in X$ with $d(x_l,y_l)< s_l\cdot\eps_l$ and
the following additional properties.
\begin{enumerate}
	\item({\em Gromov--Hausdorff close to tangent space})
	\[|\bar B_{\eps_l\cdot l}(x_l),\bar B_{\eps_l\cdot l}(o_{x_l})|_{GH}<\frac{\eps_l}{l};\]
	\item({\em improved strainer}) the map $(d_{x_l},F_l)$ is a $(k+1,4\cdot\delta_l)$-strainer map on $\dot B_{\eps_l\cdot l}(x_l)$;
	\item({\em improved extended strainer}) the map $(d_{x_l},\hat F_l)$ is an extended $(k+1,4\cdot\delta_l)$-strainer map on an open neighborhood $V_l$
	of $\hat\Pi_{x_l}\cap \dot B_{\eps_l\cdot l}(x_l)$;
	\item({\em large levels})
	\[\min\{d_{p^l_1}(x_l),\ldots,d_{p^l_k}(x_l),d_{q^l}(x_l)\}\geq l\cdot\eps_l;\]
	\item ({\em fiber lies in extended domain})
	\[S_{\epsilon_l} (x_l) \cap \hat \Pi_{y_l}\subset V_l;\]
	\item({\em non-contractible})  $S_{\epsilon_l} (x_l) \cap \hat \Pi^+_{y_l}$ is either not contractible
or not locally contractible.
\end{enumerate}

The first item can be arranged because in our setting tangent spaces are Gromov--Hausdorff limits of rescaled balls around a particular point
\cite[Corollary~5.7]{LN_gcba}.
The second and third item follow from  \cite[Proposition~9.4]{LN_gcba} and Lemma~\ref{lem_extrelstr}, respectively, by choosing $\eps_l$ small enough.
Similarly, the forth item can be achieved by choosing $\eps_l$ small enough. Finally,
the fifth item can then be guaranteed by choosing $s_l$ small enough.

	We define the shifted strainer maps $\Phi_l=(d_{p_1^l}-d_{p_1^l}(x_l),\ldots,d_{p_k^l}-d_{p_k^l}(x_l))$,
	as well as the shifted distance function $b_l=d_{q^l}-d_{q^l}(x_l)$.
	In particular, $\Phi_l(x_l)
	=0$.
	Now we rescale space and functions by $\frac{1}{\eps_l}$. Since $V$ is relatively compact, up to passing to subsequences, we can take a pointed Gromov--Hausdorff limit
	$(X_\infty,x_\infty)=\lim\limits_{l\to\infty} (\frac{1}{\eps_l}\cdot X,x_l)$ \cite[Proposition~5.10]{LN_gcba}.
	We also pass to corresponding limits of functions:
	$\Phi_\infty =\lim\limits_{l\to\infty}\frac{1}{\eps_l}\cdot \Phi_l$
	and
	$b_\infty =\lim\limits_{l\to\infty}\frac{1}{\eps_l}\cdot b_l$.
Item four above ensures that all coordiantes of $\Phi_\infty$, as well as the function $b_\infty$, are  Busemann functions on $X_\infty$ \cite[Lemma~2.3]{KL_haken}.

	By condition (1) above, $(X_\infty,x_\infty)$ is isometric to a pointed Gromov--Hausdorff limit of the sequence of tangent spaces
	$(T_{x_l}X,o_{x_l})$. In particular, $(X_\infty,x_\infty)$ is isometric to a Euclidean cone with tip $x_\infty$. Therefore,
	$S_1(x_\infty)$ is a CAT(1) space \cite{Ber_metrization}. Moreover, the spaces of directions $\Si_{x_l} X$ converge to $S_1(x_\infty)$ \cite[Theorem~13.1]{LN_gcba}.
By assumption, the negative gradients of the components of $\Phi_l$ provide a $\delta_l$-spherical $k$-tuple of	directions at $x_l$.
\cite[Proposition~6.6]{LN_gcba} implies that
$S_1(x_\infty)$ splits isometrically as a spherical join $S_1(x_\infty)\cong\Sph^{k-1}\ast\Si'$;
equivalently, $X_\infty$ splits isometrically as a direct product $X_\infty\cong \R^k\times X'_\infty$.
Moreover, the negative gradients of the components of $\Phi_\infty$ form a spherical $k$-tuple inside the $\Sph^{k-1}$-factor of
$S_1(x_\infty)$.

From the $L$-openness of $(d_{x_l},\hat F_l)$ (Lemma~\ref{lem_almostiso}),
	we conclude the Gromov--Hausdorff convergence
	$\lim\limits_{l\to\infty}\frac{1}{\eps_l}\cdot(S_{\eps_l}(x_l)\cap \hat\Pi^+_{y_l})=S_{1}(x_\infty)\cap\hat\Pi^+_{x_\infty}$.

	\medskip

\noindent{\bf Sublemma.}
The sequence $\frac{1}{\eps_l}\cdot(S_{\eps_l}(x_l)\cap\hat\Pi^+_{y_l})$ is
	uniformly locally contractible.
	\medskip

	By compactness of $S_{1}(x_\infty)$, we find $r>0$ such that the straining radius of $\Phi_\infty$ satisfies $\si_z>2r$ at all
	$z\in  S_{1}(x_\infty)\cap \Pi_{x_\infty}$
	and the extended straining radius of $(\Phi_\infty,b_\infty)$ satisfies $\hat\si_z>2r$ at all $z\in S_{1}(x_\infty)\cap \hat\Pi_{x_\infty}$.
	Then, for $l$ large enough, the straining radius of $F_l$ and the extended straining radius of $(F_l,b_l)$ is larger than $r\cdot\eps_l$ on
	$S_{\epsilon_l} (x_l) \cap \Pi_{y_l}$ and $S_{\epsilon_l} (x_l) \cap \hat\Pi_{y_l}$, respectively.

	Let $z_l\in S_{\epsilon_l} (x_l) \cap \hat\Pi^+_{y_l}$ be a point.
	If the distance from $z_l$ to the fiber $S_{\epsilon_l} (x_l) \cap \hat\Pi_{y_l}$ is at least $\frac{r}{2}\cdot\eps_l$, then
	$S_{\epsilon_l} (x_l) \cap \hat\Pi^+_{y_l}\cap B_{\frac{r}{2}\cdot\eps_l}(z_l)$ is contractible by \cite[Theorem~9.1]{LN_gcba}.
	On the other hand,
	if the distance from $z_l$ to the fiber $S_{\epsilon_l} (x_l) \cap \hat\Pi_{y_l}$ is smaller than $\frac{r}{2}\cdot\eps_l$, then
	$S_{\epsilon_l} (x_l) \cap \hat\Pi^+_{y_l}\cap B_{\frac{r}{2}\cdot\eps_l}(z_l)$ is contained in an ball $B_{r\cdot\eps_l}(w_l)$
	with $w_l\in S_{\epsilon_l} (x_l) \cap \hat\Pi_{y_l}$. By Proposition~\ref{prop_defohs}, the set
	$S_{\epsilon_l} (x_l) \cap \hat\Pi^+_{y_l}\cap B_{r\cdot\eps_l}(w_l)$ is contractible. It follows that any $\frac{r}{2}$-ball
	in $\frac{1}{\eps_l}\cdot(S_{\epsilon_l} (x_l) \cap \hat\Pi^+_{y_l})$ is contractible inside its concentric $2r$-ball.

	\medskip

		As one consequence, the same local contractibility holds for the hemispheres $S_{1}(x_\infty)\cap \hat\Pi^+_{x_\infty}$
		\cite[Theorem~9]{P_gh}.    Moreover, $S_{1}(x_\infty)\cap \hat\Pi^+_{x_\infty}$
			is homotopy equivalent to  $S_{\epsilon_l} (x_l) \cap \hat \Pi^+_{y_l}$,  for large enough $l$.  Hence, to arrive
		 at a contradiction, it remains to show that
		  $S_{1}(x_\infty)\cap \hat\Pi^+_{x_\infty}$ is contractible.

	Let $v\in S_1(x_\infty)$ denote the point corresponding to the negative gradient of  $b_\infty$. By semi-continuity of angles
	and the splitting $S_1(x_\infty)\cong\Sph^{k-1}\ast\Si'$, we see $v\in\Si'$ (cf.~Section~\ref{subsec_basic}). In particular, $b_\infty=b_\infty'\circ\pi'$ where $\pi'$ denotes the projection $X_\infty\cong \R^k\times X'_\infty\to X'_\infty$,
	and $b_\infty'$ is a Busemann function on $X'_\infty$. We infer
	$\hat\Pi^+_{x_\infty}\cong\{0\}\times\{b_\infty'\leq 0\}$ since $\Phi_\infty(x_\infty)=0$. Hence,
	\[S_{1}(x_\infty)\cap \hat\Pi^+_{x_\infty}\cong\Si_{x_\infty}X_\infty'\cap\{b_\infty'\leq 0\}.\]

	But $\Si_{x_\infty}X_\infty'\cap\{b_\infty'\leq 0\}=\bar B_{\frac{\pi}{2}}(v)\subset \Si_{x_\infty}X_\infty'$ and therefore
$\Si_{x_\infty}X_\infty'\cap\{b_\infty'\leq 0\}$ is contractible, since
	$\Si_{x_\infty}X_\infty'$ is CAT(1). Consequently, the hemisphere $S_{1}(x_\infty)\cap \hat\Pi^+_{x_\infty}$ is contractible. Contradiction.
\end{proof}

\bibliographystyle{alpha}
\bibliography{R4}


\end{document}